\newtheorem{definition}{Definition}
\newtheorem{theorem}{Theorem}
\newtheorem{lemma}{Lemma}
\newtheorem{remark}{Remark}
\newcommand{\colzero}[1]{\fbox{#1}}
\newcommand{\colone}[1]{$\bullet$ #1}
\begin{document}

\title{Finite Projective Geometry based Fast, Conflict-free Parallel Matrix Computations}

\author{\footnotesize{Shreeniwas Sapre$^{\rm b}$ ~~~~~~
                      Hrishikesh Sharma$^{\rm a}$ ~~~~~~
                      Abhishek Patil$^{\rm a}$ ~~~~~~
                      B. S. Adiga$^{\rm c}$ ~~~~~~
                      Sachin Patkar$^{\rm a}$} \\
    \footnotesize{$^{\rm a}${\em Dept. of Electrical Engg, Indian Institute
    of Technology Bombay, India;}} \\
    \footnotesize{$^{\rm b}${\em Dept. of Computer Science, Indian Institute of
    Technology Bombay, India; and}} \\
    \footnotesize{$^{\rm c}${\em Tata Consultancy Services, Bangalore, India}}
}

\maketitle

\begin{abstract}
Matrix computations, especially iterative PDE solving (and the sparse
matrix vector multiplication subproblem within) using conjugate gradient
algorithm, and LU/Cholesky decomposition for solving system of linear
equations, form the kernel of many applications, such as circuit simulators,
computational fluid dynamics or structural analysis etc. The problem
of designing approaches for parallelizing these computations, to get
good speedups as much as possible as per Amdahl's law,
has been continuously researched upon. In this paper, we discuss approaches
based on thea use of finite projective geometry graphs for these two
problems. For the problem of conjugate gradient algorithm, the approach
looks at an alternative data distribution based on projective-geometry
concepts. It is \textit{proved} that this data distribution is an
\textbf{optimal} data distribution for scheduling the main problem of dense
matrix-vector multiplication. For the problem of parallel LU/Cholesky
decomposition of general matrices, the approach is motivated by the
recently published scheme for interconnects of distributed systems, perfect
difference networks. We find  that projective-geometry based graphs indeed
offer an exciting way of parallelizing these computations, and in fact many
others. Moreover, their applications ranges from architectural ones
(interconnect choice) to algorithmic ones (data distributions).
\end{abstract}

\begin{keywords}
Distributed Computing; Parallel Algorithms; Parallel Processing
\end{keywords}

\section{Introduction}
\label{intro}

Computations related to large matrices arise in many applications, from
circuit simulators to well-known page ranking algorithm employed by Google.
An omnipresent type of matrix computation is linear system direct solver,
which uses LU or Cholesky factorization as its \textit{computational core}.
\textbf{LU decomposition} of a matrix \textbf{A} is its decomposition into
a lower triangular matrix and an upper triangular matrix, i.e. \textbf{A =
L$\cdot$U}. \textbf{Cholesky decomposition} decomposes a \textit{symmetric
positive definite matrix} \textbf{A} as {\textbf{A} =
\textbf{L}$\cdot$$\mathbf{L^{T}}$}. Cholesky algorithm is numerically
robust, and is often applicable due to the abundance of symmetric positive
definite linear systems in scientific computing domain. In particular,
Cholesky algorithm does not require pivoting, whereas LU decomposition
typically does require pivoting.  As the matrix sizes involved in
applications such as circuit simulators can be in order of millions or even
higher, expectedly, there has been plenty of research on parallelizing such
algorithms\,\cite{scalapack2}.

When the matrix \textbf{A} in the linear system \textbf{Ax}=\textbf{b} is
large but sparse, iterative methods are preferred over above methods.  For
\textit{symmetric positive definite} matrices, the method of
\textbf{conjugate gradients}\,\cite{SHEWCHUK94A} is often used. A large
variety of scientific and engineering applications solve system of partial
differential equations (PDEs) as well, using iterative solvers based on
this method.  The \textit{computational core} within this algorithm is a
multiplication of a sparse matrix with a dense vector(SpMV).  Hence, as per
Amdahl's law, best possible speedups can only be achieved when efforts are
directed to parallelize these computational cores, SpMV and LU/Cholesky
decomposition.

As such, parallelization of both matrix computations leads to
interconnecting memory blocks and processing units, in the usual sense.
Also, parallelization of the SpMV kernel entails problems such as load
balancing, and the relatively low ratio of computation to communication.
These are typical problems encountered during parallelization of
\textit{any} computational problem anyway\,\cite{vipin}. To address these
problems, a novel interconnection pattern was proposed by Karmarkar based
on \textit{finite projective geometries}\,\cite{karm1}. The processors and
memories are associated with elements of these geometries and the
interconnections are based on their incidence relations. The
\textit{computations assigned} to a processor, and \textit{corresponding
data distribution}, also depend on the geometry and incidence relations.
Because the geometry is symmetric in nature, \textit{the computational load
on each processor is balanced}. In fact, the load on memory blocks and
interconnect is also balanced. The automorphisms governing these geometries
are used to develop \underline{perfect-access patterns} and
\underline{perfect-access sequences}, which ensure that all the processors
and memories are simultaneously involved in \textit{communication of data
without any conflicts}. Algorithms to solve various problems on this
architecture can be developed using these properties.

For CG Algorithm, a lot of research work is actively being carried out on
the SpMV kernel implementations.  Typically one uses preconditioned CG
(PCG) as the workhorse algorithm for solving a linear system defined by
large sparse symmetric positive definite matrix.  The projective geometry
based interconnection pattern proposed by Karmarkar aims at improving
communication efficiency by \textit{superior utilization of communication
bandwidth} using perfect-access patterns and perfect-access sequences.  In
the solution to PCG, projective geometry \textit{further improves} this
efficiency, but in an \textit{independent} dimension.  We
use the finite projective geometries introduced in \cite{karm1}
as a basis to define
a novel data {distribution}, the (finite) projective data distribution.
The projective data distribution \textit{reduces the communication load} of
the algorithm, and this distribution is provably communication optimal.
The software prototype developed by us and also described in the first half
of this paper compares the performance using conventional data distribution
and the projective data distribution. The optimal projective data
distribution is patent pending \cite{pg_dd_patent}.

A naive PCG (and SpMV) implementation faces many potential performance
bottlenecks. These bottlenecks dominate the performance of the algorithm,
and hence their elimination is a pre-requisite for the effect of the data
distribution on the performance to manifest itself. Since these bottlenecks
and their solutions are independent of the main focus -- influence of
Projective Geometry principles -- of this paper, these solutions are only
outlined in this paper.

A lot of research on parallelization of LU/Cholesky decomposition
has been for dense matrices. Inter-processor data communication for
such matrices has been quite a challenging issue. Here, we present
our results for communication-efficient parallelization of this problem.
The schemes are based on \textit{more practical assumptions} than
used in scheduling ideas suggested in\,\cite{karm1}, and are motivated
by recent works\cite{rakov1,rakov2}. The choice of underlying graph
is a 4-dimensional projective space, justified later. The scheduling
schemes are based on indirect incidences based on subsumption relations
between projective subspaces. They are evaluated based on the amount
of communication and computation required in each of them. A comparison
is also drawn with conventional parallel architectures such as mesh.
We report these schemes in the latter half of this paper. As such,
we have found even more applications of projective geometry based
graphs in other areas, \textit{most notably in error correction coding}
and digital system design, that have been reported
separately\,\cite{expanders}, \cite{ldpc_pap}, \cite{cacs_pap},
\cite{ijpds_pap}.

\section{Projective Spaces based Interconnect Topologies}
\label{pg_details}

We first provide an overview of the fundamental
concepts of projective spaces, that have been used \textit{throughout}
our work. Projective spaces and their lattices are built using vector
subspaces of the bijectively corresponding vector space, one dimension
high, and their subsumption relations. Vector spaces being extension
fields, Galois fields are used to practically construct projective
spaces\,\cite{shulin}. Consider a finite field $\mathbb{F}$ = $\mathbb{GF}$(s)
with \textit{s} elements, where \textit{s} = $p^{k}$: k = +ve integer.

An example \textit{Finite Field} can be generated as follows. For
each value of $s$ in $\mathbb{GF}$(s), one needs to first find a
\textit{primitive polynomial} for the field. Such polynomials are
well-tabulated in various literature. For example, for the (smallest)
projective geometry, $\mathbb{GF}$($2^3$) is used for generation. One
primitive polynomial for this Finite Field is
$(x^3+x+1)$. Powers of the root of this polynomial, $x$, are then
successively taken, ($2^3$ -1) times, modulo this polynomial, modulo 2.
This means, $x^3$ is substituted with $(x+1)$, wherever required, since
over base field $\mathbb{GF}$(2), -1 = 1. A \textit{sequence} of such evaluations
lead to generation of the sequence of $(s-1)$ Finite field elements,
\textbf{other than 0}. Thus, the sequence of $2^3$ elements for
$\mathbb{GF}$($2^3$) is \textbf{0(by default)}, $\alpha^0 = 1, \alpha^1 = \alpha, \alpha^2 =
\alpha^2, \alpha^3 = \alpha + 1,$ $\alpha^4 = \alpha^2 + \alpha, \alpha^5 =
\alpha^2 + \alpha + 1, \alpha^6 = \alpha^2 + 1$.

A projective space of dimension \textit{d} is denoted by $\mathbb{P}$(d,$\mathbb{F}$)
and consists of one-dimensional subspaces of the (d + 1)-dimensional
vector space over $\mathbb{F}$. Zero-dimensional subspaces of the
projective space are called \textit{points}. The total number of points
in $\mathbb{P}$(d,$\mathbb{F}$) are $P(d)\;=\;\frac{s^{d+1}-1}{s-1}$.
Let us denote the collection of all the \textit{l}-dimensional projective
subspaces by $\Omega_{l}$. To count the number of elements in each
of these sets, we define the function\,\cite{karm1}

\begin{equation}
\phi(n,l,s)\;=\;\frac{(s^{n+1}-1)(s^{n}-1)\cdots(s^{n-l+1}-1)}{(s-1)(s^{2}-1)\cdots(s^{l+1}-1)}
\label{phi_eq}
\end{equation}

The number of m-dimensional subspaces of $\mathbb{P}$(d,$\mathbb{F}$)
is $\phi$(d,m,s). Hence, the number of \textit{l}-dimensional subspaces
contained in an \textit{m}-dimensional subspace(where 0 $\leq$ l
$<$ m $\leq$ d) is $\phi$(m, l, s), while the number of \textit{m}-dimensional
subspaces containing a particular \textit{l}-dimensional subspace
is $\phi$(d-l-1, m-l-1, s). For more details on projective space
construction, refer\,\cite{shulin}.

For example, to generate \textit{Projective Geometry} corresponding to
above Galois Field example ($\mathbb{GF}$($2^3$)), the 2-d
projective plane, we treat each of the above \textit{non-zero} element as
\uline{points} of the geometry. Further, we pick various subfields(vector
subspaces) of $\mathbb{GF}$($2^3$), and label them as various
\uline{lines}. Thus, the 7 lines of the projective plane are \{1, $\alpha$,
$\alpha^3$ = $1+\alpha$\}, \{1, $\alpha^2$, $\alpha^6$ = $1+\alpha^2$\},
\{$\alpha$, $\alpha^2$, $\alpha^4$ = $\alpha^2+\alpha$\}, \{1,$\alpha^4$ =
$\alpha^2+\alpha$, $\alpha^5$ = $\alpha^2+\alpha+1$\},
\{$\alpha$, $\alpha^5$ = $\alpha^2+\alpha+1$, $\alpha^6$ = $\alpha^2+1$\},
\{$\alpha^2$, $\alpha^3$ = $\alpha+1$, $\alpha^5$ = $\alpha^2+\alpha+1$\}
and \{$\alpha^3$ = $1+\alpha$, $\alpha^4$ = $\alpha+\alpha^2$, $\alpha^6$ =
$1+\alpha^2$\}. The corresponding geometry can be seen as figures
\ref{pg_deriv}.


\begin{figure*}[h]
\centerline{\subfloat[Line-point
        Association]{\includegraphics[scale=0.2]{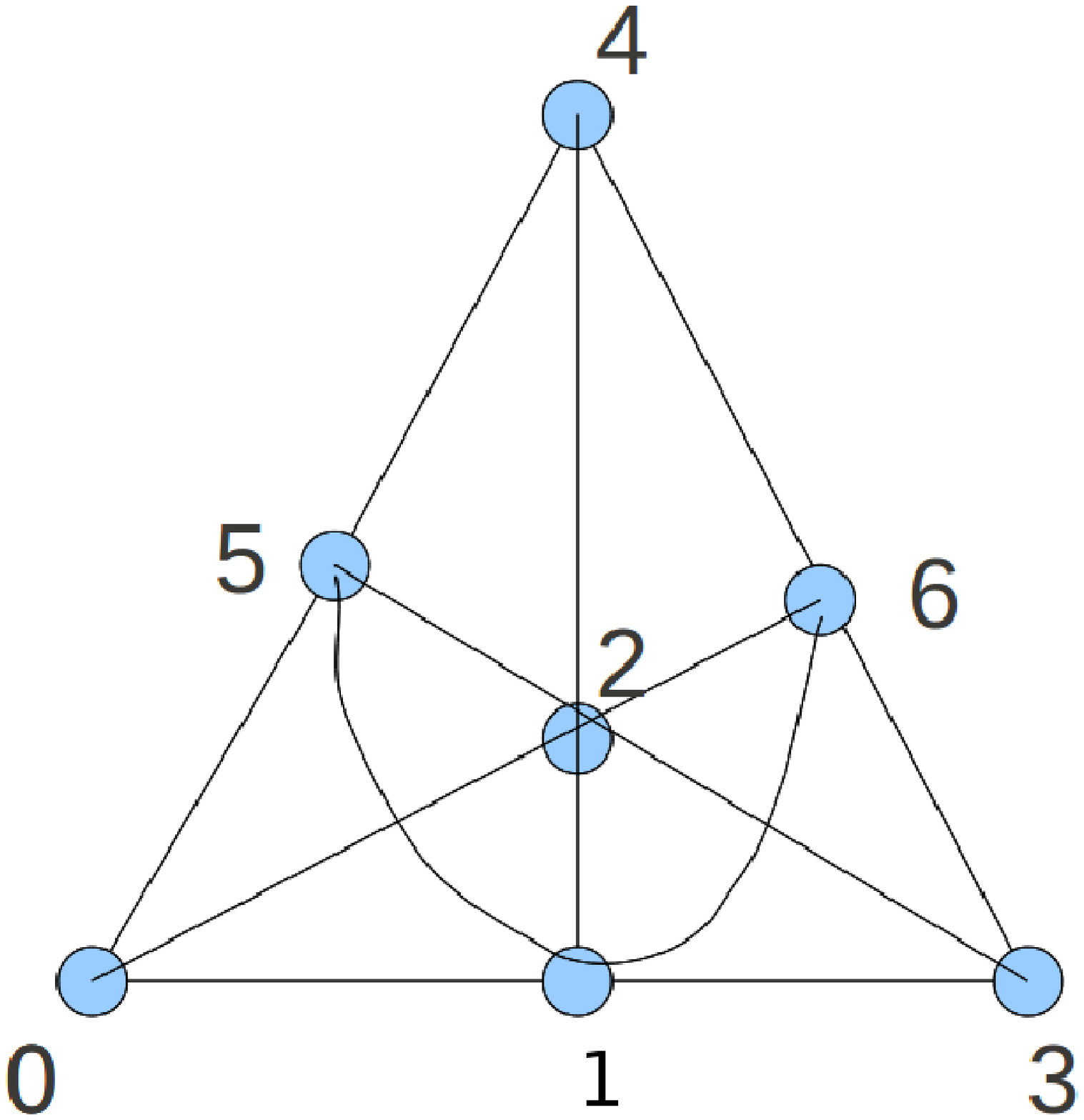}
\label{subfig1}}
\hfil
\subfloat[Bipartite
Representation]{\includegraphics[scale=0.4]{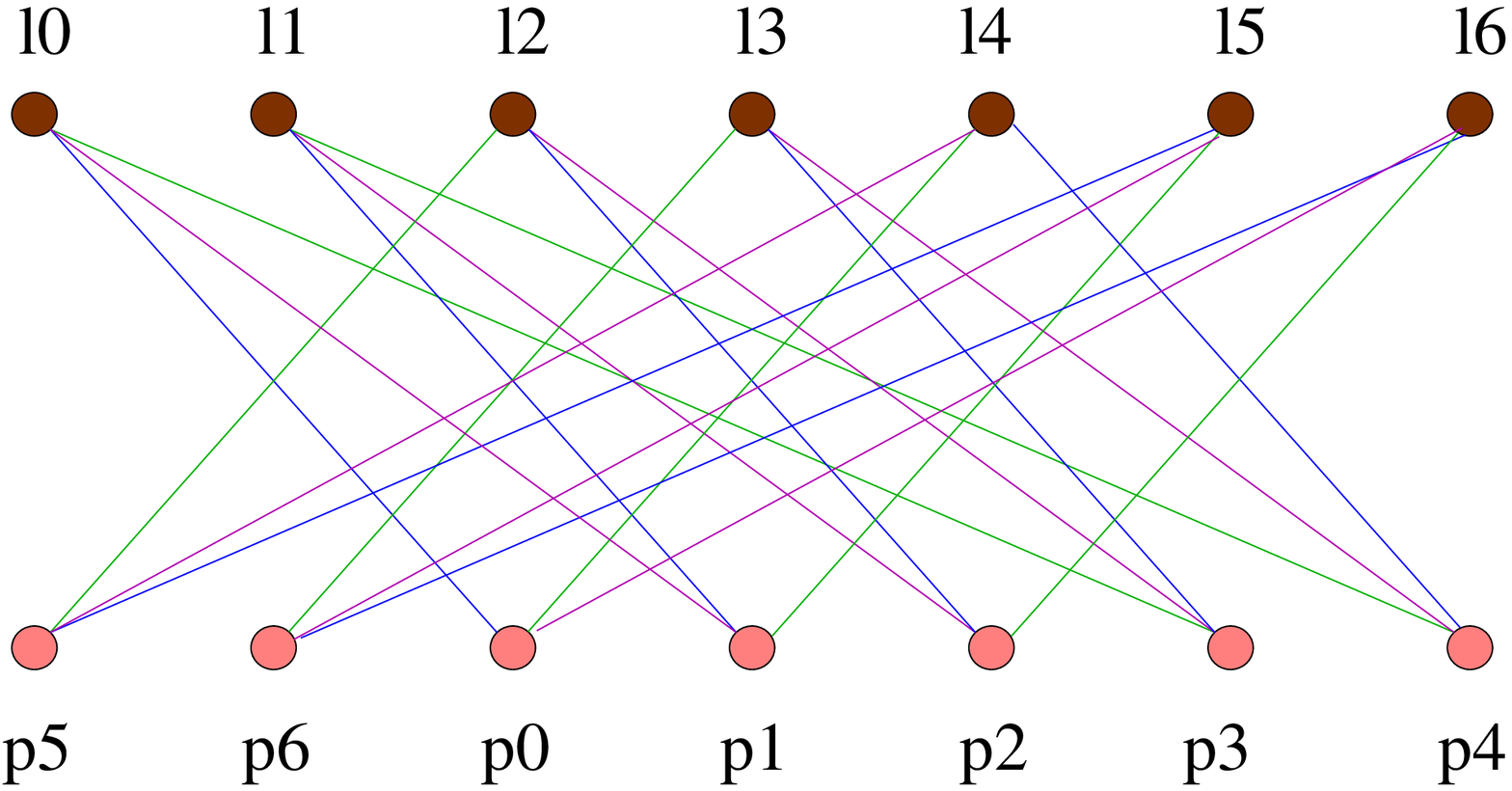}
\label{subfig2}}}
\caption{2-dimensional Projective Geometry and Its Bipartite Representation}
\label{pg_deriv}
\end{figure*}

For Projective spaces, Karmarkar had in past evolved an architecture
for parallel computing. There are problems, such as LU/Cholesky decomposition,
which have been found to be amenable for parallel computation using
processing units and memories, connected using subgraph of an instance
of projective geometry. For such problems, once the \textit{appropriate}
geometry has been identified, a pair of dimensions $d_{m}$ and $d_{p}$
is chosen. The processing units are then bijectively associated with
subspaces of dimension $d_{p}$, while memories are associated with
subspaces of dimension $d_{m}$. A connection between a processing
unit and memory is established if the corresponding subspaces have
a non-trivial intersection\,\cite{karm1}.

In this architecture, accesses to memories happen in a structured
fashion. By exploiting symmetry of the geometry, it is possible to
\textit{pair up all processing units and memories}, such that they
communicate in a conflict-free manner. Each such set of pairs forms
a \textit{perfect-access pattern}. A collection of all such patterns
together forms a \textit{perfect-access sequence}, which ensures that
every processing unit gets to communicate with every memory it is
directly connected to, in an overall computation cycle. Once the problem
is broken down to parallelizable atomic computations, and corresponding
memory blocks for storing data, these computations can then be assigned
to processor \textit{connected to} these relevant memories, which
depends on the problem and the underlying geometry. Load balancing
is ensured by symmetry of the geometry. Thus, data required for computation
is brought in parallely, computations on each processor are carried
out parallely, leading to efficient and conflict-free use of resources.

\subsection{Projective Spaces and Perfect Difference Networks}

Recently, there has been discovery of a new class of parallel interconnect
networks, perfect difference networks\,\cite{rakov1},\,\cite{rakov2}.
The diameter of these graphs being 2, any node is \textit{reachable}
from any other node in one or two hops. For various schemes of routing,
it is argued that this lesser diameter does help in improving the
worst-case communication latency. Borrowing from this basic idea of
reducing latency, the very thought of using a row of nodes in bipartite
network graph as switches or buses to have graph diameter as 2 is
worth looking at. In projective geometry, not every two lines meet
on a plane. But when they meet, their distance is 2. Hence a bus can
be used to communicate between them, something that we use later
in an LU decomposition scheme. A projective plane being a PDN,
the diameter-2 access has been \textit{implicitly} used in the SpMV solving
as well.

Next, we report our \textbf{concrete} investigations in the
application of \textbf{projective geometry} for a prototypical computing
core, viz. preconditioned conjugate gradient(\textbf{PCG}) method for
solving large sparse linear systems.

\section{Advantages of Projective Distribution in PCG Computation}

\label{cg_anal-1}
The preconditioned conjugate gradient method has SpMV as its main computing
subroutine (about $90\%$ of the computing time is spent in SpMV).  PCG
being an iterative solver, SpMV is invoked repeatedly during one invocation
of the solver. Hence, the solver is designed to prefer larger data
movements during the setup, rather than during the iterations.

In a typical SpMV implementation within \textbf{PCG} algorithm, matrix and
vector blocks are distributed to various processors.  Matrix blocks are not
modified during the iterations of PCG, and moreover are larger than vector
blocks. Hence a parallel implementation of the algorithm is structured so
as to move vector- but not matrix- blocks among the processors during the
iterations.  Each processor can perform certain block - vector
multiplications without needing additional data, whereas some
multiplications can be carried out only after receiving an appropriate
vector block.  The \textit{amount of communication} required for these
vector blocks is an \textit{important factor} influencing the performance
of parallel SpMV.

The data distribution that exploits the conflict-free communications of a
PG interconnect topology (dimension 2) identifies memories as points and
computing nodes (processors) as lines in the projective space. A memory
module and a processor are connected if the point corresponding to the
memory module lies on the line corresponding to the processor. The matrix
is distributed so that the block $A_{i,j}$ is allocated to the unique line
passing through points $i$ and $j$. Input sub-vector $x_{i}$ is stored on
processor $i$ and output sub-vector $y_{j}$ on processor $j$. With such a
data distribution, the communication of input and output sub-vectors can be
seen to be conflict free by virtue of presence of perfect access patterns
and sequences. In our experiment, we use the same data distribution, with
the difference that we \textit{do not} focus on the conflict-free nature of
communication\footnotemark[1]\footnotetext[1]{Hence we do not demand a PG-connected network },
instead focusing on the communication load dictated by the data
distribution.

In the \textit{classical} row-wise distribution, every processor needs
$n-1$ vector blocks from $n-1$ processors for it to complete one
multiplication, and hence the communication complexity of this distribution
is $O(n)$ per SpMV. In projective distribution, every processor needs to
communicate $2p\simeq2\sqrt{n}$ vector blocks to complete the
multiplication. Hence, the communication complexity of one SpMV under the
projective distribution is $O(\sqrt{n})$.

It is therefore expected that the performance of projective distribution
should have a \uline{superior scaling behavior} as compared to row-wise
distribution. In next few sections, we provide details of our
\textit{thorough} investigation of this advantage.

Though we have used \textit{preconditioned} CG, the performance of the
solver is dominated by SpMV, and hence the focus of the discussion will be
SpMV rather than the preconditioner.

\section{Design Approach for Novel PCG Solver}

\subsection{Preliminaries}

Within the context of a parallel implementation, the matrix $A$ is viewed
as a \textit{blocked} matrix, and the vectors are viewed as blocked
vectors. Notationally, we represent scalar elements of $A$ as $a_{i,j}$,
and the blocks of the matrix as $A_{k,l}$. This notational convenience of
using lowercase letters for scalars and uppercase letters for matrix blocks
can not be carried over to the vectors, since the convention is to use
lowercase letters for full vector itself. Since the blocking of the various
vectors also becomes relevant within the discussion in this report, we do
need a notation for vector blocks. We introduce the
notation $X_{k}$ to denote a block of the vector
$x$. The set of blocks $A_{i,*}$will be called the $i^{th}$ row-block of
matrix $A$, and similarly the set of blocks $A_{*,j}$will be called the
$j^{th}$column-block of matrix $A$.  A subset of a row-block will be called
a row sub-block and similarly a subset of a column-block will be called a
column sub-block.

The parallel SpMV algorithm distributes the matrix tiles and input-/output-
vector blocks to the processes. If a tile $A_{i,j}$ (or vector block $X_l$)
is allocated to process $k$, we say process $k$ \textbf{owns} the tile (or
the vector block), and is the \textbf{owner process} for the tile (or
block).  The algorithm exploits the associativity in the computation $Y_i =
\sum_{j}{A_{i,j}}\cdot X_j$, to parallely compute multiplications of
individual tiles $A_{i,j}$ with vector-blocks $X_j$.  The results of such
multiplications, which are partial-sums for $Y_i$, can then be added up to
obtain the complete $Y_i$ by its owner process, which defines to be
the same process that owns $X_i$.

Our parallel implementation uses the \textbf{MPI} library for communication
between various processes. We therefore use standard MPI terminology,
\textbf{process} to denote an MPI process usually running on a separate
computational processor, \textbf{rank} to denote the uniquely identifying
integer rank associated with each process, \textbf{rank 0} or \textbf{root}
to denote a rank that is often used in special ways.  Since the PCG study
and benchmarks were primarily focused on the data distribution rather than
the interconnect topology, we shall mostly use the term process instead of
the term ``processor''.

\subsection{Overview of PCG Algorithm}

We reproduce an overview of the reference definition of a PCG, as defined
in \cite{SHEWCHUK94A}, for the sake of easy reference in Algorithm
\ref{alg:pcg}.

\begin{algorithm}
\noindent \textbf{Input}:~Matrix $A$,~vector $b$,~initial vector
$x_{0}$, ~Pre-conditioner $M$,\\
 \hspace*{28pt}Max. number of iterations $i_{max}$, ~Error tolerance
$\epsilon$~$<$~1\\
 \textbf{Output}: ~Vector $x$ satisfying $Ax=b$ in less than~$i_{max}$
iterations with residual dropping\\
 \hspace*{28pt}~by a factor of $\epsilon$\\
{Algorithm}\\
\begin{tabular}{cll}
1 & $i\gets 0$;~$x\gets x_{\text{0}}$; & \tabularnewline
2 & $r\gets b-Ax$; & //~\textbf{SpMV} \tabularnewline
3 & $d\gets M^{-1}r$; & \tabularnewline
4 & $\delta_{new} \gets r^{T}d$; & ~//~dot~product\tabularnewline
5 & $\delta_{0}\gets\delta_{new}$; & \tabularnewline
6 & $\delta_{limit}\gets\epsilon^{2}\delta_{0}$; & // limiting residual\tabularnewline
7 & \multicolumn{2}{l}{\textbf{while}~(~$i<i_{max}$~and~$\delta_{new}>\delta_{limit}$)}\tabularnewline
8 & \hspace{24pt}$q\gets Ad$; & //~\textbf{SpMV}\tabularnewline
9 & \hspace{24pt}$\alpha\gets\delta_{new}/(d^{T}q)$; & \tabularnewline
10 & \hspace{24pt}$x\gets x+\alpha d$; & //~saxpy\tabularnewline
11 & \hspace{24pt}\textbf{if}~(~$i\equiv0$~$mod$~$50$) & \tabularnewline
12 & \hspace{24pt}\hspace{24pt}$r\gets b-Ax$; & //~\textbf{SpMV}\tabularnewline
13 & \hspace{24pt}\textbf{else} & \tabularnewline
14 & \hspace{24pt}\hspace{24pt}$r\gets r-\alpha q$; & //~saxpy\tabularnewline
15 & \hspace{24pt}\textbf{endif}~ & \tabularnewline
16 & \hspace{24pt}$s\gets M^{-1}r$; & \tabularnewline
17 & \hspace{24pt}$\delta_{old}\gets\delta_{new}$; & \tabularnewline
18 & \hspace{24pt}$\delta_{new}\gets r^{T}s$; & //~dot~product\tabularnewline
19 & \hspace{24pt}$\beta\gets\delta_{new}/\delta_{old}$; & \tabularnewline
20 & \hspace{24pt}$d\gets s+\beta d$; & //~saxpy~\tabularnewline
21 & \hspace{24pt}$i\gets i+1$; & \tabularnewline
22 & \textbf{endwhile} & \tabularnewline
\end{tabular}
\caption{PCG Algorithm}
\label{alg:pcg}
\end{algorithm}

To summarize, the algorithm performs an initial SpMV to identify the
initial and hence target residues. Subsequently, each iteration performs
various vector operations and one SpMV operation. Since SpMV performance
dominates the performance of the solver, our discussion will be mostly
about SpMV i.e. steps 2, 8 and 12 of the algorithm.

Our experiment explores various decisions at both the algorithm
and implementation level. To be able to easily compare and isolate
the effect of these decisions, each of these decisions appears as
either compile-time or most often a run-time decision. In such cases,
the choice of one or the other option is indicated at the run-time,
and the performance measurements are recorded for the active set of
parameters. In the remaining part of this section, we will introduce
these decisions. While our experimental performance results reflect
the influence of many of these decisions, in this paper, we analyze
the influence of only the data distribution.

\subsection{Data Distribution}
Conventional matrix data distributions are the 1-D or 2-D block
distributions(\cite{vipin}).  These use either 1 or 2 dimensions\footnotemark[2]\footnotetext[2]{in a Cartesian sense} of the matrix to identify the distribution.
Thus, in block 1-D distribution, $n$ row- or column-blocks are allocated to
each of the $n$ processes, and in block 2-D distribution, a square block
$(i, j), 1 \leq i, j \leq n$ is allocated to each of the $n^2$ processes.
Conventional 1-D and 2-D block distributions can be considered as
``Cartesian'' distributions, since these are based on ``Cartesian''-like
coordinates.

The projective distribution uses $n\times n$ blocks, but a different
relationship between the indices $i$ and $j$ to allocate matrix block $(i,
j)$. We choose row-wise block 1-D distribution\footnotemark[3]\footnotetext[3]{Abbreviated as
row-wise distribution hereafter} for our performance comparison, since its
compute load distribution is very close to the projective distribution.  To
highlight the similarities as well as differences between these two
distributions, we introduce a new classification of data distributions.

In this classification of distributions, the blocking (\textit{or
tiling}) of the matrix remains the same, viz, $n\times n$ square blocks,
and each processor gets $n$ tiles only.  However, we \textit{relax the
constraint} of choosing the tiles \textit{along a \underline{Cartesian}
dimension} only, allowing the tiles to be chosen more generally.  Due to
this relaxation, we call this family of distributions \textbf{Weak
\underline{Cartesian}} distributions. Both the data distributions we
consider -- the row-wise and projective -- satisfy the following: (Note:
the solver is run on $n$ processes)
\begin{definition}[{\bf Weak Cartesian} Conditions]
\begin{enumerate}
\item The $r\times r$ square matrix $A$ is viewed as a blocked matrix,
with $n\times n$ square blocks, each with $\lfloor\frac{r}{n}\rfloor$
rows and columns. The right and bottom boundary blocks $A_{*,n}$ and
$A_{n,*}$ account for all the extra rows and columns in the case
when $r$ is not an integral multiple of $n$.
\item The diagonal block $A_{i,i}$ is allocated to process $i$.
\item Each process is allocated $n-1$ other matrix blocks, apart from its
designated diagonal block.
\item Each block is allocated to a unique process.
\end{enumerate}
\label{defn:weak_cartesian_condition}
\end{definition}
Note that this classification does not require the additional $n-1$ blocks
to be from the same row or the column.  These blocks can be located
anywhere in the matrix, subject to the uniqueness condition. The uniqueness
condition also implies that two diagonal blocks can not be allocated to one
process.

We term data distributions which use $n^2$ square blocks for distribution
among $n$ processes, satisfying the weak cartesian conditions above as an
$n$-process ``\textbf{Weak Cartesian} distribution''.

During the parallel algorithm, a process will perform the tile
multiplication $A_{i,j}\cdot X_j$ only for the  tiles $A_{i,j}$ it owns.
Since the indices $i$ and $j$ are not constrained in any way in a weak
cartesian distribution, a process $k$ may own tiles $A_{i,j}$ where $i \ne
k$ and/or $j \ne k$. When $j \ne k$ (i.e. tiles from other columns), $k$
will need to \textit{receive} the corresponding input
vector block ($X_j$ in this case), before the beginning of computation of partial sums
(\textit{remember  that the matrix tiles are not moved during
multiplication}).  On the other hand, when $i \ne k$ (i.e. tiles from other
rows), $A_{i,j}\cdot X_j$ will be only a partial sum for $Y_i$.  Such a
partial sum will have to be \textit{sent} back to the process that owns
$Y_i$ for completing the computation.

Hence, an SpMV based on a weak cartesian distribution will result in each
process doing some communication to receive the input blocks it needs and
send its input blocks to other processes at the beginning of the
multiplication. After the individual multiplications have been computed,
each process will perform some communication to send partial sums it has
produced for others, and receive partial sums it needs. Different
distributions may require one or both the types of communications.

\subsubsection{Row-wise Distribution}

The row-wise distribution allocates the matrix blocks $A_{i,*}$
to the process $i$. By our convention, process $i$ starts an iteration
with its own vector block $X_{i}$. Subsequently, it needs all the
other vector blocks at some point or other within each iteration to
complete the computation of $Y_{i}$. The low diameter interconnect
topology based on projective spaces, described earlier, can be used
with even the row-wise distribution, but that may not maximize the
communication throughput. Hence, we alternatively use the projective space
structure to define a novel data distribution, described in the next
paragraph. Note the row-wise distribution requires communication
for the input vector blocks only. Since each process has all the tiles
in one row, each process ends up with complete output vector blocks.

\subsubsection{Projective Distribution - A Family of Distributions}

For PCG computation, memory modules as points and processes as lines in a
\textbf{two-dimensional} projective space $\mathcal{P}^{2}$, are used to
define an interconnect scheme between $n=p^{2}+p+1$ processes and $n$
memories for a prime power $p$.  Lines and points are duals of each other
in the projective plane.  Each line has $p+1$ points, and each point is on
$p+1$ lines. See section \ref{pg_details} for an example derivation for
such geometry.

The perfect access patterns and sequences used in communication are defined
using a mapping $c:N\rightarrow N^{p+1}$, where N is the set \{0, 1,
\ldots{}, n-1\}.  The value $c(k)$ corresponds to line $k$, and is the set
of points on this line.
\begin{lemma} 
\label{lempgincidence}
In a \textbf{two-dimensional} projective space $\mathcal{P}^{2}$, for any
$i,j(i\neq j)$,
\begin{itemize}
\item There is a unique line $k=L_{i,j}$  such that $c(k)$ contains both
the points $i$ and $j$ (\emph{Unique line through any two distinct points})
\item There is a unique point $k'=P_{i,j}$  such that the lines $c(i)$ and
$c(j)$ both contain $k'$ (\emph{Unique point common to any two distinct
lines})
\end{itemize}
\end{lemma}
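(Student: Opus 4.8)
The plan is to unwind the combinatorial description of the map $c$ into the linear-algebraic model of $\mathcal{P}^2$ from Section \ref{pg_details}, where both claims reduce to a one-line dimension count. With $d=2$, the points of $\mathcal{P}^2$ are the $1$-dimensional subspaces of $V=\mathbb{F}^3$ over the underlying field $\mathbb{F}$, the lines are the $2$-dimensional subspaces, and ``$i\in c(k)$'' (point $i$ lies on line $k$) means precisely that the $1$-dimensional subspace $U_i$ is contained in the $2$-dimensional subspace $W_k$. So the first step is to restate the two bullets as: (a) any two distinct $1$-dimensional subspaces $U_i,U_j$ lie in exactly one $2$-dimensional subspace; (b) any two distinct $2$-dimensional subspaces $W_i,W_j$ share exactly one $1$-dimensional subspace.

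For (a), choose nonzero $u\in U_i$ and $v\in U_j$; since $U_i\neq U_j$, the vectors $u,v$ are linearly independent, so $S=\langle u,v\rangle$ is a $2$-dimensional subspace containing both $U_i$ and $U_j$ --- this gives existence, with $L_{i,j}$ the line corresponding to $S$. For uniqueness, any $2$-dimensional $W$ containing $U_i$ and $U_j$ contains $u$ and $v$, hence $W\supseteq S$, and equality of dimensions forces $W=S$. For (b), since $W_i\neq W_j$ are both $2$-dimensional inside the $3$-dimensional $V$, the sum $W_i+W_j$ properly contains $W_i$, so it equals $V$; the Grassmann identity then gives $\dim(W_i\cap W_j)=2+2-3=1$, so $W_i\cap W_j$ is the unique common point $P_{i,j}$. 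Alternatively, (b) is the projective dual of (a) under the correlation that sends a subspace to its annihilator in $V^{*}$ and reverses inclusions; or one can finish by double counting --- uniqueness in (a) together with the identity $n\binom{p+1}{2}=\binom{n}{2}$, valid because $n=p^2+p+1$ and each of the $n$ lines carries $\binom{p+1}{2}$ point-pairs, forces every pair of points onto exactly one line, and the dual count handles (b).

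There is no genuine obstacle in this argument; the only care needed is in the translation step --- making explicit that the abstract incidence encoded by $c$ is subspace containment in the model and that distinct points/lines correspond to distinct subspaces --- after which (a) and (b) are immediate. If one instead wants a purely combinatorial proof directly from the counting formulas $\phi(\cdot,\cdot,\cdot)$ of Section \ref{pg_details}, the only mildly delicate point is extracting existence (rather than merely ``one line per pair on average'') from the double count, which is exactly why it is cleanest to pair the count with the trivial ``at most one'' direction.
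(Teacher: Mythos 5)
Your proof is correct. Note that the paper itself gives no proof of Lemma \ref{lempgincidence}: it states the two incidence properties as standard facts about the projective plane, implicitly justified by the construction of Section \ref{pg_details} in which points are the $1$-dimensional and lines the $2$-dimensional subspaces of the $3$-dimensional vector space over $\mathbb{F}$. Your argument supplies exactly that implicit justification --- translating membership in $c(k)$ into subspace containment, getting the unique line through two distinct points as the span of two representative vectors, and getting the unique common point of two distinct lines from the dimension count $\dim(W_i\cap W_j)=2+2-3=1$ --- so it is the natural completion of what the paper takes for granted rather than a divergent route; your duality and double-counting alternatives are also sound, with the caveat you correctly flag that the pure count alone yields existence only when paired with the ``at most one'' direction.
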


For a given $p$ and hence $n$, this mapping defines a data distribution --
the projective data distribution referenced in earlier sections.  More
generally, by varying $p$, we get a family of 2-D projective distributions,
all of which follow the same data distribution procedure. We now define the
data assignment for a given $p$ (and hence $n$).
~\\
~\\
~\\
\hrule
\begin{algorithmic}[0]
\Procedure{Projective Data Assignment}{$A$}
\State 1. Allocate the diagonal matrix block $A_{i,i}$ to process $i$.
\State 2. To allocate blocks on process $k$, consider the line $k$
corresponding to the process.
\State 3. For every distinct pair of points $i$, $j$ on line $c(k)$,
allocate matrix block $A_{i,j}$
\State (and $A_{j,i}$) to process $k$.
\EndProcedure
\end{algorithmic}
\hrule
~\\
~\\

\subsubsection{Characteristics of Projective Distribution}
We will now study some formal characteristics of the projective
data distribution. Note that as yet, we have not used the fact that the
matrix is sparse in any way, and hence, the results would hold true for
dense matrices as well.
\begin{lemma} 
\label{lempgpart}
The distribution of matrix blocks by procedure Projective Data Assignment
results in a symmetric partition of the matrix blocks.
\end{lemma}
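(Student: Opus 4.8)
The plan is to separate the claim into its two components and handle them in turn: first, that procedure Projective Data Assignment produces a genuine \emph{partition} of the full set of $n^2$ matrix blocks among the $n$ processes (every block is owned by exactly one process), and second, that this partition is \emph{symmetric}, meaning $A_{i,j}$ and $A_{j,i}$ are always owned by the same process. The symmetry half is immediate from the procedure itself: Step~3 allocates $A_{i,j}$ and $A_{j,i}$ to the same process $k$ simultaneously, and any diagonal block $A_{i,i}$ equals its own transpose, so symmetry needs only a one-line observation. The substantive part is the partition property, and it rests entirely on Lemma~\ref{lempgincidence}.

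For the partition property I would prove existence and uniqueness of the owner of each block, splitting into the diagonal and off-diagonal cases so that the two steps of the procedure are cleanly separated. For a diagonal block $A_{i,i}$: Step~1 assigns it to process $i$, and Step~3 cannot touch it because Step~3 ranges only over \emph{distinct} pairs of points; hence $A_{i,i}$ is owned by process $i$ and by no other process. For an off-diagonal block $A_{i,j}$ with $i \ne j$: Step~1 produces only diagonal blocks, so $A_{i,j}$ can be allocated only in Step~3, and Step~3 assigns it to process $k$ precisely when both points $i$ and $j$ lie on the line $c(k)$. By the first bullet of Lemma~\ref{lempgincidence} there is exactly one such line $k = L_{i,j}$, so $A_{i,j}$ has exactly one owner. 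Since every process receives at least its own diagonal block, no class is empty, and the $n$ per-process block-sets therefore form a partition of all $n^2$ blocks into $n$ nonempty classes.

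As a consistency check, and to tie the statement back to Definition~\ref{defn:weak_cartesian_condition}, I would count the blocks owned by a fixed process $k$: one diagonal block from Step~1, plus one block per ordered pair of distinct points on the line $c(k)$, of which there are $(p+1)p$. This totals $1 + p(p+1) = p^2 + p + 1 = n$ blocks per process, hence $n^2$ blocks in all, re-confirming the partition and matching condition~3 of the Weak Cartesian conditions (the $n-1 = p(p+1)$ off-diagonal blocks beyond the diagonal one).

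I do not anticipate a real obstacle. The only thing needing care is the bookkeeping between the two allocation steps — verifying that Step~1 and Step~3 have disjoint ranges (diagonal blocks versus strictly off-diagonal blocks) so nothing is double-assigned — and the clean invocation of the uniqueness half of Lemma~\ref{lempgincidence} to pin down the single process owning each off-diagonal block.
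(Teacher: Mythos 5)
Your proposal is correct and follows essentially the same route as the paper's proof: diagonal blocks go to their own process by Step 1, each off-diagonal block $A_{i,j}$ goes to the unique process $L_{i,j}$ given by the unique-line property of Lemma \ref{lempgincidence}, and symmetry follows because Step 3 allocates $A_{i,j}$ and $A_{j,i}$ together. Your added checks (disjointness of the two allocation steps and the $1+p(p+1)=n$ block count) are fine refinements but not a different argument.
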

\begin{proof}
\begin{enumerate}
\item Each diagonal block $A_{i,i}$ is allocated to process $i$.
\item Each non-diagonal block $A_{i,j}$ ($i \ne j$) is allocated to a
unique process $k=L_{i,j}$, where $c(k)$ is the unique line through points
i and j (by Lemma \ref{lempgincidence}). Thus, all the blocks are covered,
resulting in a \textit{partition}.
\item Further $A_{j,i}$ is also allocated on the same process $k$, yielding
a \textit{symmetric} partition.
\end{enumerate}
\end{proof}

\begin{lemma} 
\label{lempgpblocks}
The partition of matrix blocks by procedure Projective Data Assignment
results in each process being allocated $O(p)$ row (column-) sub-blocks.
\end{lemma}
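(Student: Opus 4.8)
The plan is to fix an arbitrary process $k$ and count directly the distinct matrix rows that are touched by the blocks it owns; each touched row $r$ contributes exactly one row sub-block (the subset of the row-block $A_{r,*}$ that $k$ actually holds), so bounding the number of touched rows by $O(p)$ gives the claim. By the Projective Data Assignment procedure, process $k$ owns precisely (i) the diagonal block $A_{k,k}$, and (ii) the blocks $A_{i,j}$ and $A_{j,i}$ for every unordered pair of distinct points $i\neq j$ lying on the line $c(k)$.

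First I would record that $|c(k)| = p+1$, say $c(k) = \{q_0,\dots,q_p\}$. Every off-diagonal block $A_{i,j}$ that $k$ owns has, by construction, both indices $i,j \in c(k)$; hence the set of row indices appearing among the blocks owned by $k$ is contained in $c(k)\cup\{k\}$, a set of size at most $p+2$. For each row index $r$ in this set, the collection $\{A_{r,s} : A_{r,s} \text{ owned by } k\}$ is a subset of the row-block $A_{r,*}$, i.e. a row sub-block, and there is exactly one such sub-block per touched row. Therefore process $k$ is allocated at most $p+2 = O(p)$ row sub-blocks.

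Finally I would dispatch the column case: since the partition produced by Projective Data Assignment is symmetric (Lemma \ref{lempgpart}), that is, $k$ owns $A_{j,i}$ whenever it owns $A_{i,j}$, the identical count applies to column indices, so $k$ is allocated $O(p)$ column sub-blocks as well. As a sanity check, each row sub-block here has at most $p+1$ blocks, so $k$ owns $O(p)\cdot O(p) = O(p^2) = O(n)$ blocks overall, matching $p^2+p+1 = n$. There is no genuine obstacle in this argument; the only subtlety to handle carefully is the overloading of the index $k$ as a process, as the line $c(k)$, and potentially as a point --- in particular one must not assume $k\notin c(k)$ --- which is exactly why the bound is stated as $p+2$ rather than $p+1$ and is harmless for the asymptotic claim.
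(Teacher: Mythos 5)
Your proposal is correct and is essentially the paper's own argument: the paper likewise counts one row sub-block per point of the line $c(k)$ (giving $p+1$), adds a possible extra sub-block from the diagonal block $A_{k,k}$ when point $k$ does not lie on line $k$, and obtains columns by the symmetry of the partition (Lemma \ref{lempgpart}). Your phrasing via the containment of row indices in $c(k)\cup\{k\}$ and the resulting bound $p+2$ is just a mild repackaging of the same count, including the same care about whether $k\in c(k)$.
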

\begin{proof}
Since the partition is symmetric, we prove the result for row sub-blocks
and the corresponding result for column sub-blocks will follow from
symmetry.
\begin{enumerate}
\item Consider the non-diagonal blocks allocated to process $k$, and a
        point $i$ incident on line $k$: Each point $l (\neq i)$ on the line
        $c(k)$ will cause a matrix block $A_{i,l}$ to be allocated to
        process $k$. The set of such blocks $\{A_{i,l}: l \in c(k), l \neq
        i\}$ is a row sub-block of A.
\item Thus, each point on the line $k$ results in one row sub-block of
        matrix A. Hence, the $p+1$ points on line $k$ will result in $p+1$
        row sub-blocks.
\item If point $k$ is not incident on line $k$, the diagonal block
        $A_{k,k}$ which is always allocated to process $k$, will introduce
        an additional row sub-block.
\end{enumerate}
\end{proof}

\begin{remark}: 
If a process $k$ is allocated a sub-block $A_{i,j}$, it will need the
vector block $X_j$ to perform its allocated computations, and produce
partial block $Y_i$.
\end{remark}
\begin{remark}: 
>From Lemma \ref{lempgpblocks}, and the remark above, it is obvious, that
each process will require $O(p)$ input blocks, and will produce partial
sums for $O(p)$ output blocks.
\end{remark}

To satisfy this input/output requirement, $O(p)$ units of
communication need to be recieved, and $O(p)$ to be sent. Adding them up,
we get the following \textbf{most important result} of our work.

\begin{lemma} 
Projective Distribution has a communication complexity of $O(p) =
O(\sqrt{n})$ per process.
\end{lemma}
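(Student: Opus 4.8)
The plan is to assemble this lemma directly from Lemma \ref{lempgpblocks} and the two remarks that follow it, handling the ``send'' and ``receive'' directions of vector-block communication separately and then adding them. First I would fix a process $k$ and recall exactly which matrix blocks it owns: by procedure Projective Data Assignment these are the blocks $A_{i,j}$ (and $A_{j,i}$) for every distinct pair of points $i,j\in c(k)$, together with the diagonal block $A_{k,k}$. By Lemma \ref{lempgpblocks} these blocks organise into $O(p)$ row sub-blocks, and by the symmetry of the partition (Lemma \ref{lempgpart}) into $O(p)$ column sub-blocks as well.

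Next I would convert block ownership into the two communication costs. For the \emph{receive-input} cost: each column sub-block owned by $k$ uses a single column index $j$, so to perform all its multiplications $A_{i,j}\cdot X_j$ the process must hold $X_j$ for each of its $O(p)$ column indices; it already owns $X_k$, hence it receives at most $O(p)$ input vector blocks, and dually its own $X_k$ is requested by $O(p)$ processes. For the \emph{send-output} cost: each row sub-block owned by $k$ contributes a partial sum to exactly one output block $Y_i$; since $k$ owns only $Y_k$ (it owns $A_{k,k}$), it must ship its partial results for the $O(p)$ remaining output indices $i\neq k$ to their owners. The partial sums that \emph{arrive} at $k$ for $Y_k$ come only from the owners of the blocks $A_{k,*}$, i.e.\ from the processes indexed by the lines through the point $k$, of which there are $p+1$ (equivalently, use the dual incidence statement of Lemma \ref{lempgincidence}); so the inbound count is also $O(p)$. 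Adding inbound and outbound gives $O(p)$ units of communication per process, and since $n=p^{2}+p+1$ we have $p=O(\sqrt{n})$, which is the stated bound.

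The argument is essentially bookkeeping, so I do not expect a deep obstacle; the one spot needing care is verifying that the \emph{incoming} partial-sum count (and the count of processes requesting $X_k$) is $O(p)$ and not merely the outgoing count — here one invokes that the number of lines through a fixed point is $p+1$, rather than re-applying Lemma \ref{lempgpblocks}. I would also state explicitly, as the surrounding text does, that sparsity of $A$ is never used, so the bound holds verbatim for dense matrix--vector multiplication, and contrast it with the $O(n)$ per-process cost of the row-wise distribution to make the ``most important result'' claim concrete.
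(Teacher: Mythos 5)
Your proposal is correct and follows essentially the same route as the paper: the paper obtains the lemma immediately by combining Lemma \ref{lempgpblocks} with the remarks that each process needs $O(p)$ input blocks and produces partial sums for $O(p)$ output blocks, then adds the received and sent communication and uses $n=p^{2}+p+1$. Your version merely spells out the bookkeeping (including the dual counts via the $p+1$ lines through a point) in more detail than the paper does.
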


The complexity of $O(p)$ is exact for dense matrix-vector multiplication,
but only an upper bound for SpMV, since some vector-blocks may not need
to be communicated based on the matrix structure in SpMV.

\begin{remark}: 
Due to above distribution, process $k$ contains matrix blocks from
row-blocks $i$ and $j$, and also from column-blocks $i$ and $j$,
where points $i, j \in c(k)$.
\end{remark}

\begin{remark}: 
Due to matrix blocks $A_{i,j}$ and $A_{j,i}$, process $k$ will need vector
blocks $X_{i}$ and $X_{j}$ from their owner processes, and will provide
$X_{k}$ to other processes. Similarly, it will produce partial results
$Y_i$ and $Y_j$; and some other processes will contribute to $Y_k$.
\end{remark}
\begin{remark} 
A row-wise distribution needs to receive $n-1$ vector blocks and produce
only one vector block as result, to complete the block multiplication.
Since it computes the result vector block completely, there is no
communication for partial results. On the other hand, a column-wise
distribution will need only one input vector block to complete the
computation, but will need to send $n-1$ partial results. In both the
cases, the total communication complexity is $n-1=O(n)$ blocks per
process.

As the number of row sub-blocks increases, the communication complexity
for partial results will increase, while the communication complexity
for input vector blocks will reduce. On the other hand, if the number of
column sub-blocks increases, the reverse situation will manifest. The
total communication complexity, which is the sum of number of row
sub-blocks and number of column sub-blocks, will be at a minimum when both
these numbers are roughly equal, which is approximately $\sqrt{n}$.
\end{remark}

\begin{lemma} 
In an $n$-process weak cartesian distribution, if a processor has $r$ row
sub-blocks and $c$ column sub-blocks, then $r\times c \geq n$.
\begin{proof}
\begin{enumerate}
\item Consider process $k$ and the minimal rectangular sub-matrix of A,
which has
\begin{itemize}
\item all the blocks allocated to $k$,
\item no entirely empty rows and no entirely empty columns.
\end{itemize}
\item This rectangular submatrix has $r$ rows and $c$ columns, and hence
$r\times c$ blocks of the complete matrix. Note some of these blocks may
not be allocated to process $k$.
\item Since all the $n$ blocks allocated to  process $k$ are in the
submatrix, $r\times c \geq n$.
\end{enumerate}
\end{proof}
\end{lemma}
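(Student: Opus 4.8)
The plan is to count the matrix blocks owned by a fixed process $k$ in two ways and compare. By the Weak Cartesian conditions (Definition \ref{defn:weak_cartesian_condition}, items 2 and 3), process $k$ owns its diagonal block $A_{k,k}$ together with exactly $n-1$ further, pairwise distinct blocks, i.e.\ $n$ blocks in all. First I would collect the row indices occurring among these $n$ blocks into a set $R$ and the column indices into a set $C$; by definition $|R| = r$ and $|C| = c$, since each distinct occurring row index $i$ contributes precisely one row sub-block $\{A_{i,j} : A_{i,j}\text{ owned by }k\}$ of $A$ (consistent with the proof of Lemma \ref{lempgpblocks}), and symmetrically for columns.

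Next I would observe that every block $A_{i,j}$ owned by $k$ satisfies $i \in R$ and $j \in C$ by construction, so the collection of blocks owned by $k$ is a subset of the grid $\{A_{i,j} : i \in R,\ j \in C\}$, which has exactly $|R|\cdot|C| = rc$ elements. Since the $n$ owned blocks are distinct, $n \le rc$, which is the claim. Equivalently, in the language of a minimal enclosing rectangular submatrix of $A$: the submatrix spanned by the blocks owned by $k$ has $r$ non-empty block-rows and $c$ non-empty block-columns, hence $rc$ block positions, of which at least $n$ are occupied.

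The only care needed is bookkeeping: one must read ``$r$ row sub-blocks'' as ``$r$ distinct row indices appear among the owned blocks,'' and one must use that the owned-block count is exactly $n$ (not merely at most $n$) — this is where items 2 and 3 of the Weak Cartesian conditions, together with the tacit distinctness of the $n-1$ extra blocks, enter. I do not expect a substantive obstacle here; the inequality is essentially the elementary fact that a set of $n$ grid cells occupying $r$ rows and $c$ columns forces $rc \ge n$.
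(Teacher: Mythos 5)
Your argument is correct and is essentially the paper's own proof: your grid $\{A_{i,j} : i \in R,\ j \in C\}$ is exactly the paper's minimal rectangular submatrix with no empty rows or columns, and both proofs conclude by noting that the $n$ owned blocks sit inside this $r \times c$ grid. The only difference is presentational (index sets versus an enclosing submatrix), which you yourself point out.
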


The construction of this submatrix is illustrated in
Table~\ref{table:minsubmat}. Rows 1, 4 and 9 are  completely empty, and so
are columns 1, 5, 8, 9. These entirely empty rows and columns are removed
to yield the minimal submatrix described above.  Notice in this example,
that $n=9$, $r=6$ and $c=5$ and indeed $5\times 6 = 30 > 9$.

\begin{table}[b]
\small
\begin{tabular}{|c|c|c|c|c|c|c|c|c|c|}
\hline
 & 1 & 2 & 3 & 4 & 5 & 6 & 7 & 8 & 9\tabularnewline
\hline
\hline
1 &  &  &  &  &  &  &  &  & \tabularnewline
\hline
2 &  &  & x &  &  & x &  &  & \tabularnewline
\hline
3 &  &  &  & x &  &  & x &  & \tabularnewline
\hline
4 &  &  &  &  &  &  &  &  & \tabularnewline
\hline
5 &  &  &  & x &  & x &  &  & \tabularnewline
\hline
6 &  &  &  &  &  &  & x &  & \tabularnewline
\hline
7 &  & x &  &  &  &  &  &  & \tabularnewline
\hline
8 &  &  &  &  &  & x &  &  & \tabularnewline
\hline
9 &  &  &  &  &  &  &  &  & \tabularnewline
\hline
\end{tabular}
\hspace*{72pt}
\begin{tabular}{|c|c|c|c|c|c|}
\hline
 & 2 & 3 & 4 & 6 & 7\tabularnewline
\hline
\hline
2 &  & x &  & x & \tabularnewline
\hline
3 &  &  & x &  & x\tabularnewline
\hline
5 &  &  & x & x & \tabularnewline
\hline
6 &  &  &  &  & x\tabularnewline
\hline
7 & x &  &  &  & \tabularnewline
\hline
8 &  &  &  & x & \tabularnewline
\hline
\end{tabular}

\caption{Sample allocation -- $9\times 9$ matrix, $6\times 5$ minimal
submatrix}
\label{table:minsubmat}
\end{table}

Note that in our discussion of Projective distribution, we have not used
``sparsity'' of the matrix. We state and prove the next two results too,
for ``general'' matrices (potentially dense). We thus speak of ``MV''
or Matrix Vector multiplication, instead of particularly SpMV.

\begin{lemma} 
Matrix-Vector multiplication on any weak cartesian distribution has a
per-process communication complexity bounded below by $O(\sqrt{n})$, where
$n$ is the number of processes.

\begin{proof}
\begin{enumerate}
\item Consider process $i$, which has $n$ tiles allocated to it.
\item Let $r$ be the number of row sub-blocks and $c$ be the number of
column sub-blocks on process $i$. Then the communication complexity at
process $i$ will be $r+c$ ($c$ communications to receive input sub-vectors
and $r$ communications to send output sub-vector partial sums).
\item Since $r\times c \geq n$, $c \geq \frac{n}{r}$
\item Hence, the total communication complexity is bounded below by
$r+\frac{n}{r}$, as r varies.
\item $r+\frac{n}{r}$ will be minimum ($=2\sqrt{n}$) when $r=\sqrt{n}$.
\end{enumerate}
\end{proof}
\end{lemma}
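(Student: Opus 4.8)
The plan is to reduce the statement to the rectangle inequality $r\times c\geq n$ proved in the immediately preceding lemma, and then minimise the resulting cost $r+c$ by the AM--GM inequality. First I would fix an \emph{arbitrary} weak Cartesian distribution and an arbitrary process $k$, and let $r$ be the number of row sub-blocks and $c$ the number of column sub-blocks allocated to $k$. These two integers already pin down the communication cost: distinct column sub-blocks of $k$ lie in distinct matrix columns, and to multiply the tiles it owns in column $j$ the process must hold $X_j$; since $k$ owns only $X_k$ itself (and column $k$ is among its $c$ columns because $A_{k,k}$ is allocated to $k$ by condition~2 of Definition~\ref{defn:weak_cartesian_condition}), it must \emph{receive} at least $c-1$ input vector blocks. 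Dually, each of its $r$ row sub-blocks yields a partial sum for some output block $Y_i$, and every such partial sum except the one for $Y_k$ must be \emph{sent} to its owner, so $k$ performs at least $r-1$ output communications. Hence the per-process communication is at least $(r-1)+(c-1)=r+c-2$, i.e.\ $\Theta(r+c)$, which is what the given proof records simply as $r+c$.

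Second, I would invoke the preceding lemma, which states $r\cdot c\geq n$ (the minimal bounding submatrix of the $n$ tiles owned by $k$ has $r$ non-empty rows and $c$ non-empty columns, hence $rc$ cells $\geq n$), and combine it with AM--GM:
\[
r+c\;\geq\;2\sqrt{rc}\;\geq\;2\sqrt{n}.
\]
Therefore every process, and in particular the least-loaded one, incurs at least $2\sqrt{n}-2=\Omega(\sqrt{n})$ units of block communication, which is the asserted lower bound. Equality in AM--GM requires $r=c$, so the bound is attained exactly by distributions that balance the row- and column-sub-block counts at $\approx\sqrt{n}$ — precisely what the projective distribution does, by Lemma~\ref{lempgpblocks} — so the projective distribution is communication-optimal within the weak Cartesian family.

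The argument is short, and I do not expect a real obstacle; the one point I would take care to spell out is step one, namely that the communication cost genuinely is $\Theta(r+c)$ and cannot be made smaller. This rests entirely on the modelling assumption that matrix tiles are not moved during the multiplication: a process owning a tile in column $j$ has no option but to fetch $X_j$, and a process owning a tile in a row $i\neq k$ must ship its partial $Y_i$ to the owner of $Y_i$, so the per-row-/per-column-sub-block cost cannot be amortised away. The other thing worth noting is purely cosmetic: a lower bound should be written $\Omega(\sqrt{n})$ rather than $O(\sqrt{n})$; the substantive content is that no weak Cartesian distribution can beat $\sqrt{n}$ per-process communication.
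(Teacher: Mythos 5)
Your proposal is correct and follows essentially the same route as the paper's proof: both bound the per-process communication by the count of row plus column sub-blocks, invoke the preceding rectangle lemma $r\times c\geq n$, and minimise $r+c$ subject to it (your AM--GM step is just the paper's minimisation of $r+\frac{n}{r}$ at $r=\sqrt{n}$ in another guise). Your extra care about the $-1$ terms for the process's own blocks and the remark that the bound should be stated as $\Omega(\sqrt{n})$ rather than $O(\sqrt{n})$ are welcome refinements but do not change the argument.
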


Again, based on matrix structure in SpMV, some vector-block communication
will be elided, the result may not hold for \textit{all} sparse matrices

Since matrix-vector multiplication on projective distribution has
communication complexity of $O(\sqrt{n})$, which is the lower bound for
matrix-vector multiplication on weak cartesian distributions, we have the
next theorem.
\begin{theorem} 
For matrix-vector multiplication, projective distribution has the minimum
communication complexity among the weak cartesian distributions.
\end{theorem}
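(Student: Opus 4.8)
The plan is to obtain the theorem as an immediate corollary of the two lemmas just established: the lower-bound lemma, which shows that matrix--vector multiplication on \emph{any} weak cartesian distribution forces per-process communication at least $2\sqrt n$, and the upper-bound lemma, which shows the projective distribution achieves $O(p)=O(\sqrt n)$. There is thus essentially no new argument to produce; the content lies in (i) checking that the projective distribution is a legitimate member of the class over which we are minimizing, and (ii) making precise the sense in which it is ``minimum''.

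First I would verify that the projective data distribution satisfies Definition~\ref{defn:weak_cartesian_condition}: it tiles the $r\times r$ matrix into $n\times n$ square blocks; step~1 of the Projective Data Assignment procedure places $A_{i,i}$ on process $i$; by Lemma~\ref{lempgpart} the assignment is a symmetric partition of all $n^{2}$ blocks, so every block has a unique owner; and counting the $\binom{p+1}{2}$ point-pairs on a line (each pair contributing $A_{i,j}$ and $A_{j,i}$) together with the diagonal block shows each process owns exactly $1+p(p+1)=n$ blocks, i.e. $n-1$ beyond its designated diagonal block. Hence the projective distribution is an $n$-process weak cartesian distribution and is a valid competitor in the theorem.

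Next I would simply combine the two bounds. By the $r\times c\ge n$ lemma, a process of any weak cartesian distribution with $r$ row sub-blocks and $c$ column sub-blocks has communication cost $r+c \ge r + n/r \ge 2\sqrt n$, so the worst-case per-process communication is at least $2\sqrt n$, however cleverly the extra blocks of each process are placed. On the other hand, by Lemma~\ref{lempgpblocks} (and its symmetric counterpart for columns) every process under the projective distribution has $O(p)$ row sub-blocks and $O(p)$ column sub-blocks, hence receives $O(p)$ input vector blocks and returns $O(p)$ partial output blocks, for a total of $2p+O(1)=O(\sqrt n)$; for dense MV this count is essentially $2p\approx 2\sqrt n$. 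Comparing, the projective distribution meets the universal floor up to constant factors (and, for dense MV, essentially exactly), so it is order-optimal among all weak cartesian distributions --- which is the assertion of the theorem.

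The step that needs the most care is not a genuine obstacle but a matter of precision: ``minimum communication complexity'' must be understood asymptotically, as optimality of the $\Theta(\sqrt n)$ order of growth of worst-case per-process communication, rather than as strictly beating every alternative by a constant. One should also observe that the lower-bound lemma bounds the \emph{maximum} per-process cost, whereas the projective scheme, being symmetric, incurs the same $\Theta(\sqrt n)$ cost on \emph{every} process, so ``attaining the bound'' is well defined; and that the comparison is stated for dense (general) matrix--vector multiplication, since sparsity can only elide communications and therefore cannot overturn the ordering.
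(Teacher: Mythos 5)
Your proposal is correct and follows essentially the same route as the paper, which likewise obtains the theorem immediately by combining the $r\times c\ge n$ lower-bound lemma (giving $r+c\ge 2\sqrt n$ for any weak cartesian distribution) with the $O(p)=O(\sqrt n)$ communication bound already proved for the projective distribution. Your extra checks --- that the projective assignment satisfies the weak cartesian conditions with exactly $1+p(p+1)=n$ blocks per process, and that ``minimum'' is to be read asymptotically --- are sound refinements of details the paper leaves implicit, not a different argument.
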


Based on the reasons mentioned earlier, this result too may not hold in
its entirety for sparse matrices. Characterizing the sparse matrices
with respect to this result is work in progress.

Observe that a process $k$ needs vector block $X_{j}$ only if the process
is allocated a block $A_{i,j}$ for some $i$ at some point of time. Further,
such a block can be allocated to process $k$ if and only if the point $j$
is on line $k$. Since every line has only $p+1$ points, obviously, a
process $k$ will need only $p+1$ (or $p$ if point $k$ is also on line $k$)
vector blocks.

By similar logic, a process $k$ will produce partial sums for only $p+1$
vector blocks. These characteristics of the projective distribution have
been explained in the following example.

\subsubsection{Illustration of Projective Distribution}

We illustrate this distribution with a connection scheme for p=3 in
table\,\ref{tab:pgdistconn}. It is easy to verify that the incidence
properties (one line through any two points and vice versa) mentioned
above hold for connection scheme. Table \ref{tab:pgdistblocks}
shows the matrix blocks allocated to each process. Table \ref{tab:pgdd}
shows distribution in a matrix form; $(i,j)^{th}$ entry indicates
the process to which the block $A_{i,j}$ is allocated. To
make the pattern visible, blocks allocated to process 0 have been shown
in a box (e.g. \colzero{0}), and those allocated to process 4 been marked
with a bullet (e.g. \colone{4}) in Table \ref{tab:pgdd}. From these tables,
it is easy to verify:
\begin{enumerate}
\item Incidence properties i.e. existence of one line through any two
points, and a point on any two lines. This can be seen by observing in
Table \ref{tab:pgdistconn} that
\begin{table}[h]
\caption{PG Connections}
\label{tab:pgdistconn}
\centering
\begin{tabular}{|r|rrrr|} \hline
Line  & \multicolumn{4}{c|}{Points}\tabularnewline \hline
0 & 6& 7& 9& 2\tabularnewline
1 & 7& 8& 10& 3\tabularnewline
2 & 8& 9& 11& 4\tabularnewline
3 & 9& 10& 12& 5\tabularnewline
4 & 10& 11& 0& 6\tabularnewline
5 & 11& 12& 1& 7\tabularnewline
6 & 12& 0& 2& 8\tabularnewline
7 & 0& 1& 3& 9\tabularnewline
8 & 1& 2& 4& 10\tabularnewline
9 & 2& 3& 5& 11\tabularnewline
10 & 3& 4& 6& 12\tabularnewline
11 & 4& 5& 7& 0\tabularnewline
12 & 5& 6& 8& 1\tabularnewline
\hline
\end{tabular}
\end{table}
\begin{table}[h]
\caption{Projective Data Distribution - Blocks for each Process}
\label{tab:pgdistblocks}
\centering
{\tiny
\begin{tabular}{|c|ccccccccccccc|} \hline 
Proc. & \multicolumn{13}{c|}{Allocated Blocks}\tabularnewline \hline 
0&0,0&6,7&6,9&6,2&7,6&7,9&7,2&9,6&9,7&9,2&2,6&2,7&2,9\tabularnewline
1&1,1&7,8&7,10&7,3&8,7&8,10&8,3&10,7&10,8&10,3&3,7&3,8&3,10\tabularnewline
2&2,2&8,9&8,11&8,4&9,8&9,11&9,4&11,8&11,9&11,4&4,8&4,9&4,11\tabularnewline
3&3,3&9,10&9,12&9,5&10,9&10,12&10,5&12,9&12,10&12,5&5,9&5,10&5,12\tabularnewline
4&4,4&10,11&10,0&10,6&11,10&11,0&11,6&0,10&0,11&0,6&6,10&6,11&6,0\tabularnewline
5&5,5&11,12&11,1&11,7&12,11&12,1&12,7&1,11&1,12&1,7&7,11&7,12&7,1\tabularnewline
6&6,6&12,0&12,2&12,8&0,12&0,2&0,8&2,12&2,0&2,8&8,12&8,0&8,2\tabularnewline
7&7,7&0,1&0,3&0,9&1,0&1,3&1,9&3,0&3,1&3,9&9,0&9,1&9,3\tabularnewline
8&8,8&1,2&1,4&1,10&2,1&2,4&2,10&4,1&4,2&4,10&10,1&10,2&10,4\tabularnewline
9&9,9&2,3&2,5&2,11&3,2&3,5&3,11&5,2&5,3&5,11&11,2&11,3&11,5\tabularnewline
10&10,10&3,4&3,6&3,12&4,3&4,6&4,12&6,3&6,4&6,12&12,3&12,4&12,6\tabularnewline
11&11,11&4,5&4,7&4,0&5,4&5,7&5,0&7,4&7,5&7,0&0,4&0,5&0,7\tabularnewline
12&12,12&5,6&5,8&5,1&6,5&6,8&6,1&8,5&8,6&8,1&1,5&1,6&1,8\tabularnewline
\hline
\end{tabular}
}
\end{table}
\begin{table}[h]
\caption{Projective Distribution Example - Process owning each Block}
\label{tab:pgdd}
\centering
\begin{tabular}{|r|rrrrrrrrrrrrr|} \hline 
Row & \multicolumn{13}{c|}{Column}\tabularnewline
\cline{2-14}& 0 & 1 & 2 & 3 & 4 & 5 & 6 & 7 & 8 & 9 & 10 & 11 & 12 \tabularnewline\hline
0 &\colzero{0}&7&6&7&11&11&\colone{4}&11&6&7&\colone{4}&\colone{4}&6\tabularnewline
1 &7&1&8&7&8&12&12&5&12&7&8&5&5\tabularnewline
2 &6&8&2&9&8&9&\colzero{0}&\colzero{0}&6&\colzero{0}&8&9&6\tabularnewline
3 &7&7&9&3&10&9&10&1&1&7&1&9&10\tabularnewline
4 &11&8&8&10&\colone{4}&11&10&11&2&2&8&2&10\tabularnewline
5 &11&12&9&9&11&5&12&11&12&3&3&9&3\tabularnewline
6 &\colone{4}&12&\colzero{0}&10&10&12&6&\colzero{0}&12&\colzero{0}&\colone{4}&\colone{4}&10\tabularnewline
7 &11&5&\colzero{0}&1&11&11&\colzero{0}&7&1&\colzero{0}&1&5&5\tabularnewline
8 &6&12&6&1&2&12&12&1&8&2&1&2&6\tabularnewline
9 &7&7&\colzero{0}&7&2&3&\colzero{0}&\colzero{0}&2&9&3&2&3\tabularnewline
10 &\colone{4}&8&8&1&8&3&\colone{4}&1&1&3&10&\colone{4}&3\tabularnewline
11 &\colone{4}&5&9&9&2&9&\colone{4}&5&2&2&\colone{4}&11&5\tabularnewline
12 &6&5&6&10&10&3&10&5&6&3&3&5&12\tabularnewline
\hline
\end{tabular}
\end{table}

\begin{enumerate}
\item Given
any two rows, there is one common entry between the two. e.g. Rows 2 and 6
have the entry ``8'' in common.
\item Given any two entries (numbers), there is one row containing both the
entries.  e.g. Entries 1 and 5 are found in row ``12''.
\end{enumerate}
\item Number of vector blocks required (input) and produced (output) are
p+1. Note the computations performed by process 0 as an example :

\begin{enumerate}
\item Partial sum $\mathbf{y_{0}}\gets A_{0,0}\cdot\mathbf{x_{0}} $,
\item Partial sum $\mathbf{y_{2}}\gets A_{2,6}\cdot\mathbf{x_{6}}+A_{2,7}\cdot\mathbf{x_{7}}+A_{2,9}\cdot\mathbf{x_{9}}$,
\item Partial sum $\mathbf{y_{6}}\gets A_{6,2}\cdot\mathbf{x_{2}}+A_{6,7}\cdot\mathbf{x_{7}}+A_{6,9}\cdot\mathbf{x_{9}}$,
\item Partial sum $\mathbf{y_{7}}\gets A_{7,2}\cdot\mathbf{x_{2}}+A_{7,6}\cdot\mathbf{x_{6}}+A_{7,9}\cdot\mathbf{x_{9}}$,
\item Partial sum $\mathbf{y_{9}}\gets A_{9,2}\cdot\mathbf{x_{2}}+A_{9,6}\cdot\mathbf{x_{6}}+A_{9,7}\cdot\mathbf{x_{7}}$.
\end{enumerate}

Thus, process 0 can perform all its computations using only 4 blocks
$\mathbf{x_{2}}$, $\mathbf{x_{6}}$, $\mathbf{x_{7}}$ and $\mathbf{x_{9}}$,
and produces only 4 blocks $\mathbf{y_{2}}$, $\mathbf{y_{6}}$,
$\mathbf{y_{7}}$ and $\mathbf{y_{9}}$. Since the example under
consideration has p=3, the number of blocks, 4, can be easily seen to be
p+1.
\end{enumerate}
These characteristics are not particular for our choice of the prime
number, and a similar example for \textbf{any} prime (rather, prime power)
can be easily constructed.

\subsubsection{Parallel Algorithm}

\label{ssec:commvecavail-1} Since SpMV is the strongest computational
component within a preconditioned CG (PCG) solver, the parallel PCG
algorithm has as one of its core component the parallel SpMV algorithm.
Note that the matrix blocks need to be distributed to the individual
processes \emph{only} at the beginning of the solver. Subsequent iterations
of the solver do not need the matrix blocks to be re-transmitted.  The
vector x, which keeps on changing for each iteration of the solver, needs
to be sent to all the processes for each iteration.

\vspace{1pt}
~\\
\hrule
\begin{algorithmic}[0]
\Procedure{CG using row-wise distribution}{}
\State \textit{Initial Step:} Let each process $i$ initiate the block
computation $A_{i,i}X_{i}$, as a partial sum
\State for $Y_{i}$.
\While{$i < n$}
\State Let each process participate in sending and receiving different
blocks
\State \hspace*{20pt}of the vector with other processes.
\If{block $X_{k}$ is received}
\State Schedule the computation(s) using $X_{k}$ viz corresponding to local
blocks $A_{*k}$,
\State \hspace*{20pt}thus providing other partial sums.
\EndIf
\State $i \gets i+1$
\EndWhile
\EndProcedure
\end{algorithmic}

\hrule
~\\
\hrule
\begin{algorithmic}[0]
\Procedure{CG using a projective distribution}{}
\State \textit{Initial Step:} Let each process $i$ initiate the block
computation $A_{i,i}X_{i}$, as a partial sum
\State for $Y_{i}$.
\While{$i < (p+1)$}
\State Let each process participate in sending and receiving different
blocks
\State \hspace*{20pt}of the vector with other processes.
\If{block $X_{k}$ is received}
\State Schedule the computation(s) using $X_{k}$ viz corresponding to local
blocks $A_{*k}$,
\State \hspace*{20pt}thus providing other partial sums.
\EndIf
\State $i \gets i+1$
\EndWhile
\Statex
\While{$i < (p+1)$}
\State Send the non-local partial sums produced on $i$ are sent to their
respective owner
\State \hspace*{20pt}processes.
\State At process $i$, add up the received non-local partial sums, produced
by other
\State \hspace*{20pt}processes for $i$.
\State $i \gets i+1$
\EndWhile
\EndProcedure
\end{algorithmic}
\hrule
~\\

It is obvious from the algorithm definitions that the communication
complexity for the row-wise distribution is $\mathbf{O(n)}$, while for the
projective distributions it is $\mathbf{O(\sqrt{n})}$.

\subsection{Vector Communication}

Every SpMV execution requires a communication of vector blocks for the
block computation to complete. The communication of vector blocks can be
done in a variety of  ways. An implementation can choose to broadcast the
entire vector at the beginning of SpMV. However, this alternative suffers
from two drawbacks:
\begin{enumerate}
\item The assumption that each process needs all the vector blocks may not
be valid. As we have seen earlier, this assumption does not hold for
projective distributions.
\item Even otherwise, all the processes wait for entire vector to be
available, before carrying out any block-level multiplication. As the
number of processes increases, the size of a vector block required for one
block multiplication reduces significantly. Hence, broadcasting the vector
introduces a large overhead, which can be avoided. Since the multiplication
of one matrix block needs only one vector-block and not the entire vector,
the parallelism can be increased by scheduling vector communications at a
block level.
\end{enumerate}

Our implementations therefore do not use broadcast alternatives. Instead,
each of the vector blocks required is explicitly communicated. The overhead
of waiting for data to be available is relatively low.

\subsection{Result Vector Ownership}

Apart from the distribution of matrix A and vector $x$, the distribution of
the resultant vector $y$ also influences the communication in PCG. In the
row-wise distribution, since every process owns complete rows of the
matrix, at the end of SpMV, each process ends up with complete elements
(i.e. \textit{not} partial sums) of the resultant vector $y$. Consequently,
the dot-products and saxpy's in the PCG algorithm can be carried out in
parallel on all the processes, using the vector blocks for all the relevant
vectors. This approach still requires the processes to reduce the partial
dot-products, but the communication required in this case is of a single
scalar after every dot product instead of the full vector. As a result,
after SpMV, the vector blocks computed by the processes do not need to be
collected on a single process. This approach can thus save significant
amount of unnecessary communication.

Since the projective distributions allocate blocks from different rows and
different columns, the vector blocks resulting are only partial sums.
Communication steps are therefore necessary for these partial sums to be
summed up together. However, even including this additional overhead, the
communication complexity of the projective distribution is \textbf{far
superior} to the row-wise distribution.

Hence our implementations consistently use the approach of not storing
the entire vector on a single process, but keeping the resultant vector
blocks on their owner processes.

Note that this optimization becomes relevant only when optimizing SpMV in
context of the PCG algorithm, and may not be relevant at all when SpMV is
considered stand-alone.

\subsection{Packing Matrix, Input/Output Vector blocks}

As explained above, the processes communicate vector blocks with other
processes. When a process $q$ sends the vector block $X_{q}$ to
another process $s$, some of the communication may be unnecessary.
Since the matrix is sparse, it is quite likely that the processes
$s$ does not use all the elements of $X_{q}$ when multiplying the
blocks. If $q$ knows which columns of $X_{q}$ are required by $s$
(or by every other process in general), each such communication can
be made more lightweight by sending only the required vector elements
instead of the entire vector block. The information about which matrix
block (and hence its owning process) needs which elements depends
upon matrix structure and hence can be computed during problem
setup. In the simplest form of this approach, the sending process
packs the vector block when sending and the receiving process unpacks
the vector on receipt.

In the case of projective distribution, the same concept applies to the
result vector block (partial sum). The rows that are completely zero will
not contribute to the result vector block, and hence need not be
communicated when reducing the partial sums at the end of SpMV.

This optimization turns out to be more powerful, when it is extended
further to the matrix blocks, particularly the non-diagonal blocks.
When storing a non-diagonal matrix block $A_{i,j}$, the all-zero
rows \textbf{within this block} can be eliminated and the row numbers
within this block renumbered. Similarly, the zero-columns can be eliminated
and column numbers renumbered. When a block is completely packed this
way, it turns out that the vector-unpacking step becomes redundant.
Thus, the sending process still packs the vector block based on the
structure at the receiving process, but the receiving process can
use this packed vector block without unpacking it, since the column
numbers in the packed matrix block and column numbers in the packed
vector block are identical.

A projective
distribution using packed matrix blocks will produce result vector
blocks which are themselves packed. Hence, the process sending the
partial sums incurs no additional packing/unpacking overhead. The
receiving process unpacks the received partial sum based on the structure
information.

\section{Method and Experimental Results for PCG Performance Evaluation}

The parallel CG (with Jacobi preconditioner) implementation was done
in C++ for both row-wise and projective distributions.
Both these implementations were carried out by the same researcher,
using the same common code base for common operations. This ensured
that the optimization style in both the codes was the same, and also
that the benefit of common optimizations was uniformly available to
both the distributions uniformly.

The experiment was carried out on the EKA cluster at Computational
Research Laboratories. The individual nodes are 8-core Intel Clovertown
(Xeon X5365 @ 3GHz), with 16 GB RAM and 4x4MB shared L2 cache, running
HP XC. The performance was measured using different number of processes
to solve a problem. The number of processes chosen were suitable for the
projective
distribution (viz. of the form $p^{2}+p+1$ where p is a prime
power). Thus, the timings have been measured on 7, 13, 21, 31, 57,
73, $\ldots$ blades. During the runs, 4 threads were used on each
core. The sparse matrices in the dataset have been chosen from the
University of Florida matrix market\,\cite{davis}.

The time required for various steps has been measured in terms of
cycles as returned by ``rdtsc'' instruction, which are then converted
to time in seconds based on initial calibration. Operations such as
reading the matrix, preprocessing the matrix, distributing it are
part of the algorithm setup, and hence are not included in the timings.
Some raw simulation details can be found in Table \ref{table: MatSim_Details}.

\begin{table}[h]
\caption{Matrix Simulation Details}

\label{table: MatSim_Details}
\centering
\begin{tabular}{|l|c|c|r@{.}l|r@{.}l|} \hline
\multirow{2}{*}{Matrix}&\multirow{2}{*}{No. Rows}&\multirow{2}{*}{No. Non-zeroes}&
\multicolumn{4}{c|}{Time (sec)}\tabularnewline
\cline{4-7}
&&& \multicolumn{2}{c|}{7 Processes}& \multicolumn{2}{c|}{91 Processes}\tabularnewline \hline
msdoor  & 415,863 & 20,240,935 & 9& 0& 3&0 \tabularnewline \hline
F1  & 343,791 & 26,853,113 & 27 &0& 9& 0\tabularnewline \hline
crankseg\_2 & 63,838 & 14,148,858 & 1&8 & 0&55 \tabularnewline \hline
Benelechi1 & 345,874 & 13,050,496 & 1&0 & 0&4 \tabularnewline \hline
audikw\_1 & 943,695 & 77,651,847 & 30&0 & 6& 0\tabularnewline \hline
af\_shell4 & 504,855 & 19,188,875 & 2&4 & 0&8 \tabularnewline \hline
af\_0\_k101 & 503,625 & 17,550,675 & 1&4 & 0&5 \tabularnewline \hline
\end{tabular}
\end{table}

\begin{figure}[!h]
\begin{centering}
\includegraphics[scale=0.58]{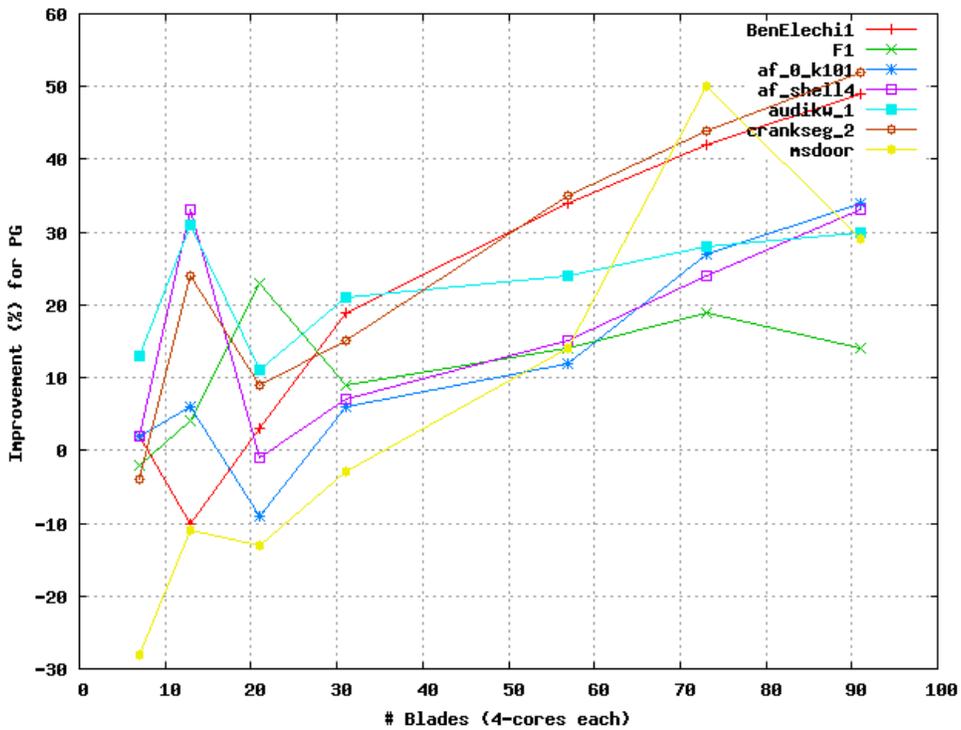}
\par\end{centering}
\caption{Percentage Improvement in projective Distribution w.r.t. row-wise}
\label{somecomp-1}
\end{figure}
\begin{table*}[h]
\caption{Percentage Improvement while using projective geometry based distribution}
\label{table:percimp-1-1}
\centering
\begin{tabular}{|l|r|r|r|r|r|r|r|} \hline
\multirow{2}{*}{Matrix}&\multicolumn{7}{c|}{Percentage Improvement using projective}\tabularnewline
\cline{2-8}&7&13&21&31&57&73&91\tabularnewline\hline
BenElechi1& 2 &-10 &3 &19& 34& 42 &49\tabularnewline\hline
F1 & -2 &4 &23& 9& 14& 19& 14\tabularnewline\hline
af-0-k1-1 & 2& 6& -9& 6 &12& 27& 34\tabularnewline\hline
af-shell4 & 2 &33 &-1 &7 &15& 24& 33\tabularnewline\hline
audikw-1 & 13 &31& 11& 21 &24 &28 &30\tabularnewline\hline
crankseg-2 & -4 &24 &9 &15& 35& 44& 52\tabularnewline\hline
msdoor & -28& -11 &-13 &-3 &14 &50 &29\tabularnewline\hline

\end{tabular}
\end{table*}

The plots show the number of processes on X-axis and the time in seconds
on the Y-axis. Figure \ref{somecomp-1} plots the performance improvement
when using projective geometry distribution against when using the
row-wise distribution. For CG algorithm's computation, performance
improvement rate is higher when the number of processes is lower. As the
number of processes increase, the rate of improvement drops in both the
row-wise and projective geometry distributions.
In row-wise distribution, the performance flattens around 20-30 processes,
and actually starts degrading
beyond 50 processes. In projective geometry distribution, the performance
starts degrading only in a few graphs, and that too beyond 70 processes.
In most of the cases, the flattening goes much beyond 50 processes.

In the next part of the paper, we report our investigations in the
application of \textbf{projective geometry} for another prototypical
computing core, viz. \textbf{LU/Cholesky Decomposition}.

\section{Overview of LU/Cholesky Decomposition}
\label{lu_ov}

We use the popular trailing matrix update algorithm for decomposition
\,\cite{vipin}. The matrices involved in circuit simulation are
generally extremely big in size, hence only a (matrix\nobreakdash-)block
level parallelization can be cost-effective. It also improves the
probability of even distribution of computational load. To generalize
the algorithm to block level, consider the partitioning of the $N\times N$
matrix $A$ into square $B=\frac{N}{b}$ blocks of size $b\times b$.
One can similarly partition $L$ and $U$ matrices into blocks $L_{i,j}$
and $U_{i,j}$ respectively, where $i,j\in{0,1,\ldots,B-1}$. Clearly,
$L_{i,j}$ and $U_{j,i}$, where $j>i$ will be \textbf{0} matrices.
Assuming well-conditioned matrices, the generalized algorithm without
pivoting is then as follows.

\begin{algorithm}
\caption{Block LU Decomposition Algorithm}
\label{lu_algo}
\begin{algorithmic}[1]
\For{$i \leftarrow 0\;\mathbf{to}\; B-1$}
      \State $A_{i,i}$ $\leftarrow$ blockLU($A_{i,i}$)
    \Statex
    \For{$j \leftarrow i+1 \;\mathbf{to}\; B-1$}
      \State $L_{j,i}$ $\leftarrow$ $A_{j,i}U_{i,i}^{-1}$
    \EndFor
    \Statex
    \For{$k \leftarrow i+1 \;\mathbf{to}\; B-1$}
      \State $U_{i,k}$ $\leftarrow$ $L_{i,i}^{-1}A_{i,k}$
    \EndFor
    \Statex
    \For{$j \leftarrow i+1 \;\mathbf{to}\; B-1$}
        \For{$k \leftarrow i+1 \;\mathbf{to}\; B-1$}
          \State $A_{j,k}$ $\leftarrow$ $A_{j,k}$ - $L_{j,i}U_{i,k}$
        \EndFor
    \EndFor
\EndFor

\end{algorithmic}
\end{algorithm}

In this algorithm, the decomposition is \textit{in-place}, hence the
$L_{i,j}$, $U_{i,j}$ matrices can be obtained from decomposed $A_{i,j}$
itself. In each iteration of main loop(line (1)), the operations to
be carried out, and which we have optimized, are:
\begin{itemize}
\item \textbf{{Row/Column Update}} In the $i^{th}$ iteration, the blocks
        of $i^{th}$ row/column are updated(lines (4), (7)). These updates
can be characterized by a triplet \{$(i,j,i)$ or $(i,i,k)$: $i<j,k\leq B-1$\}
of indices.
\item \textbf{{Trailing Matrix Update}} In the $i^{th}$ iteration, blocks
        \{$A_{j,k}$: $i<j,k\leq B-1$\} are updated(line (11)). This is the
most \textbf{dominant} operation of this entire computation. These
updates can be characterized by the triplet $(i,j,k)$(or $(i,j,j)$
when j=k) of indices.
\end{itemize}
While parallelizing this computation, one can assign processing units
which work on different values of indices \textit{j} and \textit{k},
every $i^{th}$ iteration. This way, processing units will do independent
computations simultaneously, every iteration. In such a case, these
processing units will need blocks $L_{j,i}$ and $U_{i,k}$, and these
blocks have one index in common with the block to be updated. This
property is exploited in adapting this algorithm for Karmarkar's architecture,
which we discuss in next few sections.

The \textit{Cholesky decomposition} is another direct matrix decomposition
method, $\mathbf{A}$ = $\mathbf{L}\cdot\mathbf{L^{T}}$, where $\mathbf{L}$
is a lower triangular matrix. The computation of $\mathbf{L}$ follows
nearly identical pattern of the 3 main steps as of LU decomposition
per iteration. The major advantage of Cholesky computation is that
\textit{pivoting} is \textbf{not} required. Hence our schemes are
expected to give clearer, better results, when adapted to Cholesky
decomposition. However to save space, we only refer to LU decomposition
in the forthcoming sections.

\section{Projective Space Details for LU/Cholesky Decomposition}

The topology for scheduling trailing matrix update is governed by
choice of appropriate subspaces $\Omega_{i}$ of some projective space
and their subsumption relations. This relation guides the specification
of interconnection network for the multiprocessing system. Our choice
for topology is a \textit{modified form} of the interconnect proposal
in \,\cite{karm1}. We map the block row and column indices of non-decomposed
matrix $A$ to \textbf{points} of some projective geometry. The distributed
(main) memory blocks as well as processors are mapped to the \textbf{lines},
while either the computation or the communication gets mapped to the
\textbf{planes} of the same geometry. Incidence relationships of lines onto
planes is used to design the connections within the system.
~\\
\hrule
\begin{algorithmic}[0]
\Procedure{Assignment Procedure}{$A$}
\State 1.  Store block $A_{i,j}$ in the memory representing the line
joining points associated
\State with indices \textit{i} and \textit{j}.
\State 2. Assign each computation characterized by a value of indices'
triplet(\textit{i,j,k}) to the
\State processor-memory pair associated with a line
mapped to plane corresponding to
\State the triplet.
\EndProcedure
\end{algorithmic}
\hrule
~\\
\begin{lemma}
A 4-dimensional projective space, ${\mathbb{P}}(4,GF(2))$, has same
number of lines and planes.
\end{lemma}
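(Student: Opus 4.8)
The plan is to invoke the counting function $\phi(n,l,s)$ from Equation~\eqref{phi_eq} directly, instantiating it twice — once for lines and once for planes in $\mathbb{P}(4,GF(2))$ — and then observe that the two expressions coincide. Here we have $d = 4$, $s = 2$, so lines are the $1$-dimensional projective subspaces (counted by $\phi(4,1,2)$) and planes are the $2$-dimensional projective subspaces (counted by $\phi(4,2,2)$). The claim is exactly that $\phi(4,1,2) = \phi(4,2,2)$.

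First I would write out $\phi(4,1,2) = \dfrac{(2^{5}-1)(2^{4}-1)}{(2-1)(2^{2}-1)} = \dfrac{31 \cdot 15}{1 \cdot 3} = 31 \cdot 5 = 155$, using the formula with $n = 4$, $l = 1$: the numerator runs $(s^{n+1}-1)(s^{n}-1)$ (two factors, since $l+1 = 2$) and the denominator runs $(s-1)(s^{2}-1)$. Next I would compute $\phi(4,2,2) = \dfrac{(2^{5}-1)(2^{4}-1)(2^{3}-1)}{(2-1)(2^{2}-1)(2^{3}-1)} = \dfrac{31 \cdot 15 \cdot 7}{1 \cdot 3 \cdot 7}$, where the $(2^{3}-1) = 7$ factor appears in both numerator and denominator and cancels, leaving $\dfrac{31 \cdot 15}{3} = 155$. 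Since both counts equal $155$, the number of lines equals the number of planes.

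A slightly more conceptual way to see the same cancellation — which I would mention as a remark rather than belabor — is the standard duality of projective spaces: in $\mathbb{P}(d,\mathbb{F})$ the $m$-dimensional subspaces are in bijection with the $(d-1-m)$-dimensional subspaces (hyperplane–point duality, extended to all dimensions via annihilators in the dual vector space). For $d = 4$, lines ($m=1$) are dual to planes ($d-1-m = 2$), so the two collections have equal cardinality for \emph{any} field $\mathbb{F}$, not just $GF(2)$; the arithmetic above is just the explicit verification of this bijection's consequence for $s = 2$.

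There is essentially no obstacle here — the statement is a one-line numerical identity once the formula is plugged in, and the only thing to be careful about is getting the index bookkeeping in $\phi(n,l,s)$ right (how many factors appear in numerator and denominator, and which exponents). I expect the author's proof to simply display the two evaluations of $\phi$ and note the common value.
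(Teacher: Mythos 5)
Your proposal is correct and matches the paper's own (very terse) justification: the paper simply notes that $\phi(4,1,2)=155$ and $\phi(4,2,2)=155$, which is exactly the computation you carry out. Your added remark on line--plane duality in $\mathbb{P}(4,\mathbb{F})$ is a valid and slightly more general observation, but the core argument is the same.
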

By above easy-to-prove lemma, our choice of 31-point 4-d projective
geometry, generated over binary field, leads to a symmetric scheme
involving same number of computations as well as processors. Some
of the combinatorial numbers related to ${\mathbb{P}}(4,GF(2))$ can
be found by evaluating function $\phi(.,.,.)$(c.f. section \ref{pg_details}):
~\\
\hrule
\begin{itemize}
\item There are $31$ points, $155$ lines ($\phi(4,1,2)$) and $155$ planes
($\phi(4,2,2)$) in the geometry.
\item Each line has $3$ points on it while a plane has $7$ points.
\item Each plane has $7$ lines, and exactly $3$ lines belonging to a plane
pass through any point.
\item A line is incident on $7$ different planes.
\item A point is present on $15$ different lines ($\phi(3,0,2)$) and $35$
different planes ($\phi(3,1,2)$).
\end{itemize}
\hrule

\subsection{Projective Space Automorphisms}

\textit{Projective subspaces are known to be sets of points}. We
use two automorphisms on these points, namely \textit{Frobenius}
and \textit{Shift} automorphisms, to derive schedules.

The function corresponding to \textbf{Frobenius} automorphism is
$\Phi(x)=x^{p}$, where $x\in\mathbb{F}$
and $p$ is the characteristic of $\mathbb{F}$. Application of this
automorphism in $\mathbb{P}(4,GF(2))$ corresponds to doubling of
index of each point modulo 31 and taking its remainder modulo 31.
Repeated application leads to $5$ different automorphisms using the
Frobenius map.

Similarly, for \textbf{Shift} automorphism, a `shift' function
can be defined on points as $L_{x}:(0,x^{i})\rightarrow(0,x^{i+1}),\forall i\in0,1,\ldots,30$,
where $x$ is the generator polynomial. Clearly, application of $L_{x}$
corresponds to incrementing the index of a point by 1, modulo 31.
As earlier, repeated application of $L_{x}$ leads to 31 different
automorphisms. The most important advantage of working with these
two automorphisms is that in a 4-d projective space ${\mathbb{P}}(4,GF(2))$,
starting with a particular line or plane, it is \textbf{possible}
to enumerate \textbf{all other} lines and planes using these two.
These concepts will be later applied in establishing the interconnection
network.

\subsection{Perfect Matching Patterns}

Using the automorphisms described above, we develop a sequence of
patterns depicting perfect matchings in the bipartite graph made of
lines and planes in ${\mathbb{P}}(4,GF(2))$. We denote a k-d projective
subspace by a k-tuple of points. Since each plane subsumes $7$ different
lines, there can be a \textit{sequence} of $7$ patterns based on
$7$ different matchings. Define the first pattern as $S_{1}$ : $\Omega_{2}$
$\rightarrow$ $\Omega_{1}$, such that
\begin{equation}
S_{1}(p)=L_{x}^{a}(\Phi^{b}(0,1,18)),\textit{ if }p=L_{x}^{a}(\Phi^{b}(0,1,2,5,11,18,19))
\end{equation}
 Thus the starting point is matching of plane $(0,1,2,5,11,18,19)$
to the line $(0,1,18)$ lying on it. Every other plane, obtained by
applying $b$ $(0\leq b\leq4)$ Frobenius followed by $a$ $(0\leq a\leq30)$
shift automorphisms to this plane, is matched to the line obtained
by applying the same sequence of automorphisms to line $(0,1,18)$.
Varying $a$ and $b$ fully($31\times5$ cases), we obtain complete
mapping for each of the $155$ planes in form of a \textit{perfect
matching}. Similarly, we can create $6$ other such perfect matching
patterns $S_{2},\cdots,S_{7}$ by mapping the first plane to the other
$6$ lines lying on it, one at a time.

The inverses of each of these matching patterns also form \textit{different}
perfect matching patterns from the set of lines to the set of planes.
These will be denoted by $S_{i}^{-1}$.

\section{LU/Cholesky Decomposition Algorithm Mapping Scheme - I}
\label{scheme1}

The first decomposition scheme uses the concept of
perfect matchings to design \textit{direct} interconnect network.
This scheme is a concrete realization of very brief decomposition
outline suggested by Karmarkar \,\cite{karm1}. In this scheme, we
use all the $155$ processors and memory blocks. For easy implementation,
each processor is connected to its own exclusive memory block(which
can be main memory), and this pair is associated with a line of ${\mathbb{P}}(4,GF(2))$.
In addition, the processor is also associated with the plane mapped
to the line through the perfect matching $S_{1}$. The plane signifies
the computation which is scheduled for this processor. Each such processor
is \textit{directly} connected to $12$ other processor-memory pairs
to form its interconnection subnetwork - $6$ pairs mapped to its
plane through perfect matchings $S_{2}$ to $S_{7}$, and $6$ more
through inverse perfect matchings $S_{2}^{-1}$ to $S_{7}^{-1}$.

The data of matrix $A$ is distributed among different memory blocks
in manner similar to one used in ScaLAPACK\,\cite{scalapack2}. It
is first partitioned into $B\times B$ blocks of size $b\times b$.
Each block index is then mapped to a point in geometry, obtained by
taking its residue modulo 31. Block $A_{p,q}$ is then placed in the
memory block given by points p(mod 31) and q(mod 31), using function
$M$: $\Omega_{0}\times\Omega_{0}$ $\rightarrow$ $\Omega_{1}$.
\begin{eqnarray*}
        M(i,j) & = & \mbox{line joining points i \& j } \forall i,\;j\;
        \in\; {0,\ldots,30} \mbox{~and~} i\neq j \\
M(i,i) &= & L_{x}^{i}(\Phi^{a}(0,1,18)),\;a\in{0,1,2,3,4} \mbox{(5 copies of \ensuremath{i^{th}}diagonal block)}
\end{eqnarray*}
For mapping computations, the triplet of block indices triplet is
first mapped to a triplet of points, in same way as above. It is then
mapped to a plane using the map $C_{1}:$ ($\Omega_{0}$x$\Omega_{0}$x$\Omega_{0}$)
$\rightarrow$ $\Omega_{2}$.
\begin{eqnarray*}
C_{1}(i,j,i)\: or\: C_{1}(i,i,k) & =& S_{1}^{-1}\textit{(line joining i and j or k)}\\
C_{1}(i,j,k) & = & \textit{plane through non-collinear points i, j and k}\\
C_{1}(i,j,j) & = & \textit{plane passing through i and the 5 lines M(i,j)}
\end{eqnarray*}
Our focus has been on a scheme with \textbf{improved} performance,
in which we take an approach \textbf{different} than Karmarkar's,
that we present in next section. Further, due to lack of space, we omit the details
of the computation, other than re-used portions such as matrix-block
mapping, as well as of the provably efficient communication schedule
for this scheme. Only brief remarks follow. The details may be found
in \cite{aby_rep}.

Over all iterations, this scheduling scheme makes use of all processing
units in ${\mathbb{P}}(4,GF(2))$. Therefore, perfect matching patterns
were applied here to develop communication strategies. This makes
the design scalable -- the use of a bigger geometry and remapping
of the problem is easy in this case.

However, this design under-utilizes the parallelism available: either
15 or 35 out of 155 processors are working every cycle, though they
are load-balanced among themselves. In later iterations, lesser number
of trailing matrix computations need to be parallelized, and correspondingly
lesser number of operational processors. Also, in later iterations,
because of duplication of diagonal elements, certain blocks get communicated
to extra number of processors. With these considerations, we propose
and evaluate a \textbf{novel scheme}, which improves upon the resource
usage as well as the wiring density.

\section{LU/Cholesky Decomposition Algorithm Mapping Scheme - II}

In previous scheme, each node had 12 direct connections, leading to
high wire density. A better scheme inspired by perfect difference
networks(PDN) is presented. Having node degree as 7, it saves upon
interconnection cost and complexity.

Using the same geometry ${\mathbb{P}}(4,GF(2))$, each processor is
once again paired up with one memory block, and each such pair is
associated with a line of the geometry. These pairs are then interconnected
to each other via \textbf{buses}; each bus mapped to a plane. Thus,
we have 155 buses in all and each bus is connected to $7$ processors
corresponding to the lines that are incident on its representative
plane.

\subsection{Motivating the Scheme}
To motivate the logic behind using multiple buses, we illustrate the
update on $(j,k){}^{\text{th}}$ block during $0^{\text{th}}$ phase.
We would like to do all these updates in parallel. The charge of block
$A{}_{\text{j,k}}$ is with the processor associated with the line
through points j and k. Thus, each processor is typically in charge
of 6 blocks. Recall from section \ref{lu_ov} that the update of block
$A{}_{\text{j,k}}$ during $0^{\text{th}}$ phase requires $L{}_{\text{j,0}}$,
$U{}_{\text{0,k}}$ (normalized blocks of $0^{\text{th}}$ column
and row respectively). These blocks are with the processors whose
line contains point 0. Thus, processor of line through {[}j, k{]}
must have connection with those processors, which map to lines through
{[}j, 0{]} and {[}0, k{]}, so that $L{}_{\text{j,0}}$, $U{}_{\text{0,k}}$
can be communicated. If points 0, j, k are \textit{collinear}, then
this communication is local, i.e. to the processor's local memory.
Else, there exists a unique plane through points 0, j, k. This plane
would contain the above three lines (totally 7 lines per plane). Hence
the requirement is to have interconnections between 7 processors associated
with 7 lines of a plane, which can be naturally be implemented using
a \underline{BUS}. The processors with point 0 in their line, must
broadcast and the other processors (without point 0) must ''listen''.
Each listening processor, listens to the \textbf{unique} BUS (corresponding
to the plane containing the line of the processor and point 0). For
each plane, at a given time, at most one processor may broadcast.
After fixed number of communication steps, each of the 155 processors
will be able to ''compute'' its trailing matrix update concurrently.

\subsection{Data Distribution}
The \textit{distribution of data} among memory blocks is identical
to the one in previous scheme. Every block $A_{i,j}$ with distinct
$i$, $j$ gets stored in the memory module of processor/line $(i,j)$.
Each diagonal block $A_{i,i}$ is stored in 5 memory modules. This
duplication helps in fast communication, as discussed later. The \textit{distribution
of computational load} (characterized by triplets described earlier)
is done using the following modified function $C_{2}$.
\begin{eqnarray*}
C_{2}: \{C_{2}(i,j,k) \mbox{\hspace{42pt}}& = & \textit{line through points j \&
k}, j \neq k\} \wedge \\
\{C_{2}(i,j,j) \mbox{\hspace{42pt}}& = & \textit{lines to which
$A_{j,j}$ is allocated}\} \wedge \\
\{\textit{$C_{2}$(i,j,i) or $C_{2}$(i,i,k)}
& = & \textit{line joining (i and j) or (i and k)}\}
\end{eqnarray*}

\begin{figure*}
\begin{centering}
\includegraphics[scale=0.49]{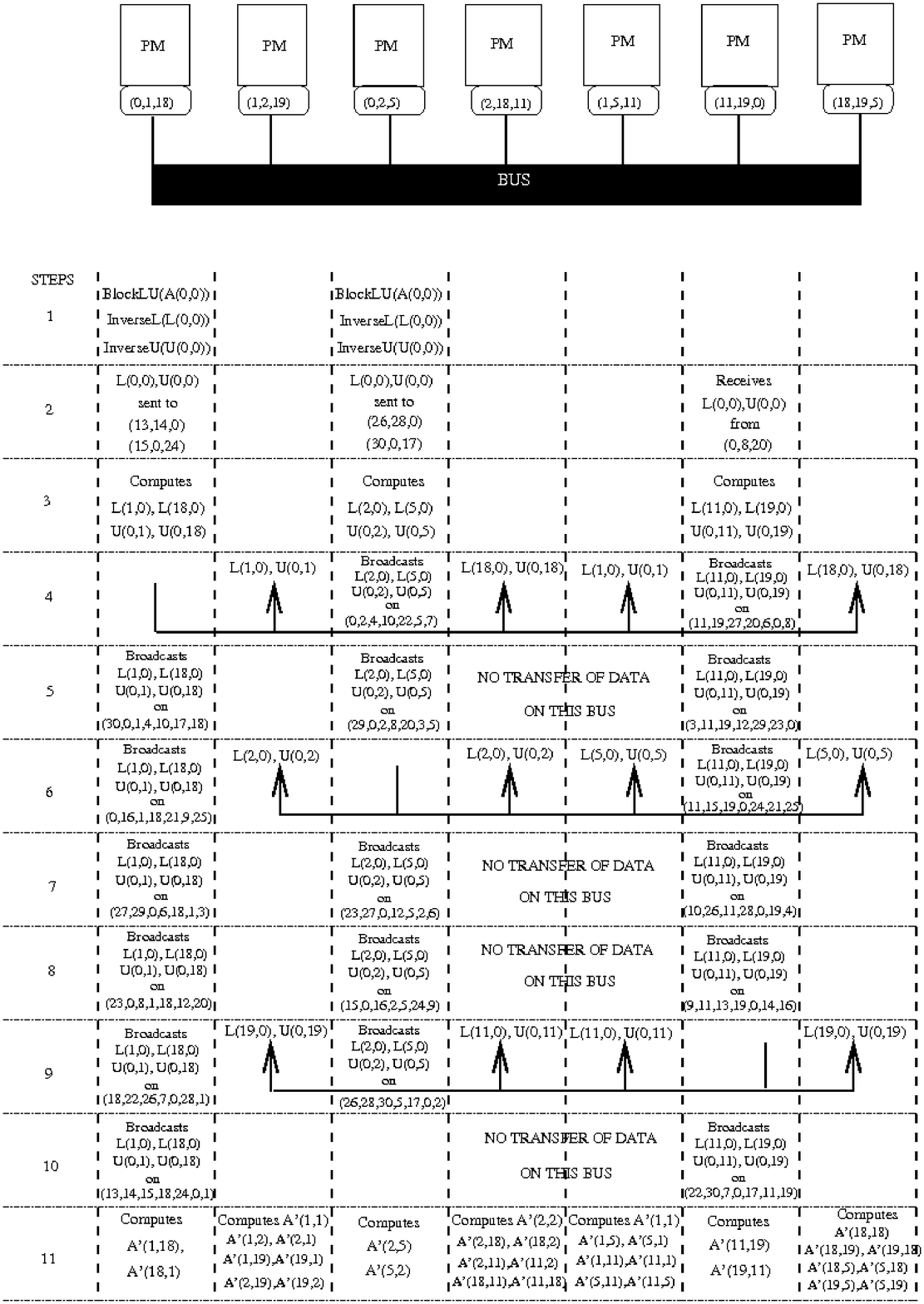}
\end{centering}
\caption{Scheme II: Execution of $0^{th}$ iteration on bus (0, 1, 2, 5, 11,
18, 19)}
\label{sch2}
\end{figure*}
\subsection{Illustration of Mapping}
We provide a running example of $i^{th}$ iteration to illustrate
the mapping. For illustration, the complete $0^{th}$ iteration is
depicted in figure \ref{sch2}. The first step, block LU decomposition
and its inverse, is simultaneously computed on $5$ different processors
per block, like in $1^{st}$ scheme. For usage in row and column updates,
data from these 5 processors needs to be transferred to all processors,
which will perform these updates in $i^{th}$ iteration. A point(represented
by $i$) lies on 15 lines in ${\mathbb{P}}(4,GF(2))$, and hence 15
processors perform these updates per iteration. Hence blocks $L_{i,i}^{-1}$
and $U_{i,i}^{-1}$ are transmitted on 5 buses, mapped to 5 processors
having these blocks. Buses are chosen such that the 15 processors
can receive them in conflict-free way. In ${\mathbb{P}}(4,GF(2))$,
each line is contained in 7 planes, and hence each processor can potentially
communicate with 7 buses. At each cycle, one processor on the bus
transmits on the bus, while remaining may or may not read the data
from the bus. By carefully choosing the 5 processors in $1^{st}$
step(starting with a line and taking its Frobenius automorphisms),
and selecting 5 suitable buses, \textit{matrix data $L_{i,i}^{-1}$
and $U_{i,i}^{-1}$ can be distributed in just one cycle without any
conflict}. \underline{Duplicating $1^{st}$ step on 5 processors thus
saves many communication cycles}. As an example, in $0^{th}$ iteration,
the 5 processors storing $A_{0,0}$ perform LU decomposition followed
by computation of their inverses. These inverses are then transferred
to other processors associated with $0$-containing lines through
$5$ buses. Processor $(0,1,18)$ transmits on bus $(13,14,15,18,24,0,1)$,
$(0,2,5)$ on $(26,28,30,5,17,0,2)$, $(0,4,10)$ on $(21,25,29,10,3,0,4)$,
$(0,8,20)$ on $(11,19,27,20,6,0,8)$, and $(0,16,9)$ on $(22,7,23,9,12,0,16)$.
The 5 processor nodes are linked via \textit{Frobenius automorphisms}.
For the $i^{th}$ iteration, each of the above lines and planes are
shifted $i$ times, using \textit{Shift automorphisms}. The \textit{row/column
updates} for $0^{th}$ iteration are scheduled now on each of the
$15$ processors with blocks $A_{0,j}$ and $A_{j,0}$, $j\in{1,\ldots,(B-1)}$,
where $B$ is number of blocks.

The \textit{trailing matrix update} for the $(j,k)^{th}$ block is
performed by processor corresponding to line $(j,k)$. This step requires
$L_{j,i}$ and $U_{i,k}$, calculated by some other processors during
row/column update. Hence these blocks need to be moved in to processor
$(j,k)$. This communication is done in $\mathbf{7}$ steps. Each
processor broadcasts these blocks on each of the 7 buses connected
to it. In $q^{th}$ step, the processor having these blocks broadcasts
it on the bus mapped to it through perfect matching pattern $S_{q}^{-1}$,
$q\in{1,\ldots,7}$, mentioned earlier. The perfect matching pattern
\textit{ensures} that in each cycle, a bus is controlled by 1 processor
only, and that a processor receives data from only 1 bus in one cycle.
By the end of these steps, data has been broadcast once on every bus
connected to a particular processor. E.g., in $i^{th}$ iteration,
a processor represented by line $(x,y,x+y)$ will need $L_{m,i}$
and $U_{i,m}$, if one of its indices is same as $m$. In such a case,
the line corresponding to this processor and the line corresponding
to processor containing $L_{m,i}$ or $U_{i,m}$ share a common point,
and hence a bus represented by some plane. So at some stage within
the 7 cycles, this processor will get the required data on the shared
bus. Thus, at the end of these $7$ steps, each processor will have
the entire information needed by it to calculate trailing matrix updates.
All the 155 processors now simultaneously compute the updates for
the trailing matrix blocks that they possess, thus completing the
$i^{th}$ iteration.

\subsection{Coherency and Synchronization Issues}

Coherence is a primary design issue in multiprocessing scenario. Since
we need only one level of memory hierarchy, coherence issue can (only)
arise in context of 5 copies of diagonal blocks in 5 processors, during
each iteration. These data copies are to be used in later cycles of
the iteration, where reading of different, stale data can be a potential
issue. However, these diagonal blocks as well as $L_{i,i}$, $U_{i,i}$
are all identical in each iteration, and hence there is no incoherence
of data.

Data synchronization conflicts arise when two or more processors try
to access some data object at the same time. Given that at a particular
moment, all active processors are doing identical computation, the
only possible conflict is a write-write conflict for a particular
matrix block in a given cycle, during trailing update. However, only
one processor works with a particular block in a cycle \textit{in
a particular iteration}, and hence synchronization conflicts also
don't arise in this scheme. Further, none of the processors involved
in sending data has to receive any data in same cycle, so there is
no I/O conflict.

\subsection{Design Analysis}

In this scheme, in most iterations, the parallelism provided by 155
processing elements is used up, bettering the degree of parallelism
exploited by the earlier scheme-I. Hence this design is scalable.
Also, the distribution of computation is almost balanced. The source
of imbalance is at later stages, when the size of matrix to be updated
``trails off'' leading to a cut in required amount
of parallelism. The resource usage is high, which leads to better
time performance. The communication is also lesser than previous scheme,
due to only L and U blocks being transferred during each iteration.
Broadcasting per processor on each bus takes one cycle, hence there
is no scope of
changing bus data transfer mode to e.g. split transactions, which can boost
the throughput further.

Having a possible target of distributed embedded systems, the
buses are expected to be off-chip/backplane buses. Traditionally,
shared buses for off-chip purposes have been implemented using tri-state
buses that drive bidirectional lines. The advantage of tri-state
bidirectional buses is that they take up fewer wires and have a smaller
area footprint. This is important since we use many such buses, and hence
their resource requirements need to be minimum. One issue is that due to
large number of buses involved, there could be more power
consumption. However, in system design, power consumption in general trades
off with throughput, and similarly we gain throughput here while consuming
some more power. Alternate interconnection patterns such as design of novel
switches to do simultaneous communication can be considered to alleviate
this problem.

\section{Experimental Results for LU Decomposition}

C++ programs for a mesh-based scheme(for comparison) and the 2 PG-based
schemes were developed for both correctness and performance evaluation.
By simulations on a uniprocessing system, the three schemes were indeed
found to be working correctly. Also, the performance figures calculated
analytically for these three schemes have been tested to be correct
by instrumenting the corresponding programs. The sources of the programs
are available with the authors on request. The tabulated experimental
data is presented in table \ref{table:comp}. For analysis, the active
period of each processor is classified into three categories. A processor
either spends time doing O($b^{3}$) computations, or O($b^{2}$)
computations, or O($b^{2}$) communication. The O($b^{3}$) computations
comprise of block LU decompositions, matrix inversions and matrix
multiplications. The O($b^{2}$) computations comprise of matrix subtraction
done during trailing update. The table also contains the \textit{average}
number of cycles in which each processor is active for the 3 different
categories.\textit{ Average processor utilization} in each category
can then be defined as the average number of cycles taken by processor
in that category, divided by total number of cycles in which all processors
finish that category's job in parallel. The normalized computation
times are reported too.

\begin{table*}[h]
\caption{Cycle Counts for Various Schemes}
\label{table:comp}
\centering
\begin{tabular}{|c|c|c|c|c|c|c|c|c|c|}  \hline
{\footnotesize Scheme/ } & \multicolumn{3}{|c|}{{\footnotesize Total Cycles}} & \multicolumn{3}{|c|}{{\footnotesize Time required}} & \multicolumn{3}{|c|}{{\footnotesize Average Cycles}}\tabularnewline \hline
{\footnotesize Block } & {\footnotesize Comp } & {\footnotesize Comm } & {\footnotesize Sub } & {\footnotesize Comp } & {\footnotesize Comm } & {\footnotesize Sub } & {\footnotesize Comp } & {\footnotesize Comm } & {\footnotesize Sub }\tabularnewline
{\footnotesize Size } & {\footnotesize O($b^{3}$) } & {\footnotesize O($b^{2}$) } & {\footnotesize O($b^{2}$) } & {\footnotesize O($b^{3}$) } & {\footnotesize O($b^{2}$) } & {\footnotesize O($b^{2}$) } & {\footnotesize O($b^{3}$) } & {\footnotesize O($b^{2}$) } & {\footnotesize O($b^{2}$) }\tabularnewline
\hline \hline
{\footnotesize Mesh:62$\times$62 } & {\footnotesize 69 } & {\footnotesize 221 } & {\footnotesize 11 } & {\footnotesize 1189.56 } & {\footnotesize 61.35 } & {\footnotesize 3.05 } & {\footnotesize 4 } & {\footnotesize 7 } & {\footnotesize 3 }\tabularnewline \hline
{\footnotesize Mesh:31$\times$31 } & {\footnotesize 196 } & {\footnotesize 583 } & {\footnotesize 56 } & {\footnotesize 422.38 } & {\footnotesize 40.46 } & {\footnotesize 3.89 } & {\footnotesize 34 } & {\footnotesize 41 } & {\footnotesize 30 }\tabularnewline \hline
{\footnotesize PG-1:24$\times$24 } & {\footnotesize 856 } & {\footnotesize 2241 } & {\footnotesize 651 } & {\footnotesize 856.00 } & {\footnotesize 93.23 } & {\footnotesize 27.08 } & {\footnotesize 82 } & {\footnotesize 171 } & {\footnotesize 73 }\tabularnewline \hline
{\footnotesize PG-1:12$\times$12 } & {\footnotesize 5023 } & {\footnotesize 12635 } & {\footnotesize 4427 } & {\footnotesize 627.88 } & {\footnotesize 131.40 } & {\footnotesize 46.04 } & {\footnotesize 669 } & {\footnotesize 1234 } & {\footnotesize 633}\tabularnewline \hline
{\footnotesize PG-1:8$\times$8 } & {\footnotesize 15291 } & {\footnotesize 37981 } & {\footnotesize 14118 } & {\footnotesize 565.77 } & {\footnotesize 175.47 } & {\footnotesize 65.23 } & {\footnotesize 2319 } & {\footnotesize 4050 } & {\footnotesize 2238}\tabularnewline \hline
{\footnotesize PG-2:24$\times$24 } & {\footnotesize 393 } & {\footnotesize 846 } & {\footnotesize 188 } & {\footnotesize 393.00 } & {\footnotesize 35.19 } & {\footnotesize 7.82 } & {\footnotesize 82 } & {\footnotesize 44 } & {\footnotesize 73 }\tabularnewline \hline
{\footnotesize PG-2:12$\times$12 } & {\footnotesize 1693 } & {\footnotesize 2994 } & {\footnotesize 1097 } & {\footnotesize 211.63 } & {\footnotesize 31.14 } & {\footnotesize 11.41 } & {\footnotesize 669 } & {\footnotesize 214 } & {\footnotesize 633 }\tabularnewline \hline
{\footnotesize PG-2:8$\times$8 } & {\footnotesize 4458 } & {\footnotesize 6444 } & {\footnotesize 3285 } & {\footnotesize 164.95 } & {\footnotesize 29.77 } & {\footnotesize 15.18 } & {\footnotesize 2319 } & {\footnotesize 510 } & {\footnotesize 2238 }\tabularnewline \hline
\end{tabular}
\end{table*}

An important observation was that across all schemes, as block size
decreases, the performance improves almost linearly due to more fine-grained
distribution of computational load. Another observation was that our
schemes have much better processor utilization than mesh scheme
for all different block sizes. Between the two schemes, for each given
block size, the $2^{nd}$ scheme needs much lesser amount of average
communication cycles while having same average computation cycles,
and hence improves upon the $1^{st}$ scheme.

In terms of implementation complexity, our architecture is easy to
implement for medium-sized matrices with cheap uniprocessors having
one level of cache, or just main memory. Each processor will need
to store approximately $\frac{n^{2}}{155}$ matrix elements, which
for medium-sized matrices, can fit in its (main) memory or even L1
caches. Like cluster computing, we can use off-the-shelf cheap processors
to interconnect and set up the configuration required for these computation
schemes. However, unlike cluster computing, these schemes use special
interconnects described earlier, not LAN. Given the low complexity
of individual processors, a customized board with multiple lightweight
microprocessor IP cores can be designed to significantly reduce the
form factor of such system.

The main aim of these simulations was to validate the correctness
of the communication and computation schedules based on projective
geometry. It may be remarked that, as indicated in\,\cite{rakov2},
the projective geometry like architectures can provide good implementations
of important communication primitives in distributed high performance
computing.

\section{Conclusion and Future Work}

For CG algorithm, although the performance difference trend between the
two distributions is slightly unclear till about 20-30 processes, the
percentage improvements seen beyond this cluster size show that the
projective
distribution seems to be performing better than row-wise distribution.
In future, we have plans to (a) exploit the symmetry exposed in the
projective distribution, (b) apply the projective distribution in multi/many core
configurations as well as GPGPU configurations,  (c) extend
these ideas to more general sparse matrices, as well as
(d) study other characteristics (e.g. load-balancing behaviour etc.) of
projective distribution.

For LU/Cholesky decomposition, we have introduced two new schemes
for processor interconnection based on projective geometry graphs,
which work efficiently. Results show that in terms of processor utilization
and total time required, these schemes do better than the conventional
mesh-based scheme. One direction of research is to make these schemes
handle large-sized matrices. Possible interesting implementations
include using distributed shared memory schemes over 1-level memory
hierarchy to accommodate storage of bigger blocks. This work suggests
suitability of using projective-geometry based graphs for application-specific
system design, in contrast to generic design setup in \,\cite{rakov1}.
As a matter of fact, variations of topologies derived from PG-based
graphs that are succinct for other potential parallel computations
have also yielded promising results, especially in the area of \textit{error
correction coding}\,\cite{expanders} and digital systems design\,
\cite{cacs_pap}.

\section{Acknowledgements}
The work was carried out with support from Innovation Labs, Tata
Consultancy Services Ltd, Bangalore, under project ID 1009295.
The authors will like to thank Dr Sunil Sherlekar, and Dr Rajendra
Lagu for the numerous discussions and suggestions related to this work.

\bibliography{ref}

\end{document}